\newtheorem{theorem}{Theorem}
\newtheorem{proposition}[theorem]{Proposition}
\newtheorem{definition}[theorem]{Definition}
\newtheorem{lemma}[theorem]{Lemma}
\newtheorem{corollary}[theorem]{Corollary}
\theoremstyle{remark}
\newtheorem{note}{{\bf Note}}
\newcommand{\const}{C}%\mathrm{const}\,}
\newcommand{\tr}{\mathrm{Tr}}
\newcommand{\supp}[1]{{\text{supp}}({#1})}
\newcommand{\nat}{\mathbb{N}}
\newcommand{\reals}{\mathbb{R}}
\newcommand{\dif}{\mathrm{d}}
\newcommand{\dist}{\mathrm{dist}}
\renewcommand{\a}{\mathbf{a}}
\renewcommand{\L}{\mathcal{L}}
\newcommand{\zb}[1]{\ensuremath{\boldsymbol{#1}}}
\newcommand{\bx}{\zb x}
\newcommand{\bxi}{\zb \xi}
\newcommand{\bzeta}{\zb \zeta}
\newcommand{\balpha}{\zb \alpha}
\begin{document}
%\begin{frontmatter}
\title{Surface Spline Approximation on SO(3)}
\subjclass[2000]{41A25, 42C15,  43A35, 43A75, 46E35 } 
\keywords{rotation group, Wigner-D function, Lie group, Lebesgue constant, positive definite kernel, surface spline, polyharmonic kernel}
%\thanks{ \emph{2000 Mathematics
   %Subject Classification:} 41A17, 41A27, 41A63, 42C15, }
%\subjclass[2000]{41A25, 42C15, 43A35, 43A75, 46E35 }
%\thanks{\emph{Key words:}
%   \keywords{rotation group, Wigner-D function, Lie group, Lebesgue constant, positive definite kernel, surface spline, polyharmonic kernel, Sobolev space, Besov space}

\author{Thomas Hangelbroek}
\address{Department of Mathematics, Vanderbilt University, Nashville, TN 37240, USA. } 
\email{thomas.c.hangelbroek@vanderbilt.edu}
\thanks{Thomas Hangelbroek wass supported by an NSF Postdoctoral Research Fellowship.}
\author{Dominik Schmid}
 \address{Institute of Biomathematics and Biometry, Helmholtz Zentrum M\"unchen,\\ German Research Center for Environmental Health, Ingolst\"adter Landstra{\ss}e 1,\\ D-85764 Neuherberg, Germany.}
\email{dominik.schmid@helmholtz-muenchen.de}
%\address{Dominik Schmid: Institute of Biomathematics and Biometry, Helmholtz Zentrum M\"unchen, German Research Center for Environmental Health, Ingolst\"adter Landstra{\ss}e 1, D-85764 Neuherberg, Germany.}
\thanks{Dominik Schmid is supported by Deutsche Forschungsgemeinschaft Grant PO 711/9-1 and FI 883/3-1.}

%\maketitle
\begin{abstract}
%Scattered data approximation problems on the rotation group $SO(3)$ naturally arise in various fields in science and engineering.
The purpose of this article is to introduce a new class of kernels on $SO(3)$ for approximation and
interpolation, and to estimate the approximation power of the associated spaces. 
The kernels we consider arise as linear combinations of Green's functions of certain differential operators on the rotation group. 
They are conditionally positive definite and have a simple, closed-form expression, lending themselves to direct implementation via, e.g., interpolation or least-squares approximation.
% or Tikhonov regularization.
To gauge the approximation power of the underlying spaces, we introduce an approximation scheme providing precise $L_p$ error estimates for linear schemes, namely with $L_p$ approximation order conforming to the $L_p$ smoothness of the target function.
\end{abstract}

\maketitle
\section{Introduction}
Approximating a function $f:\mathbb{X}\to\mathbb{C}$ on various structures $\mathbb{X}$ by linear combinations of translates of a given single basis function is a widely used method. 
At the heart of this methodology is the creation of an approximant using a fixed kernel 
$\kappa: \mathbb{X}\times\mathbb{X}\to\mathbb{C}$,
by taking a scattered linear combination of copies of this kernel:
%$s_{\Xi}({\sf{x}}) = \sum_{{\sf{\xi}} \in \Xi} A_{{\sf{\xi}}} K(\sf{x}, {\sf{\xi}}).$
$s_{\zb \Xi}({\bx}) = \sum_{\bxi \in \zb \Xi} A_{\bxi} \kappa(\bx, \bxi).$
 The success of these methods derive from their ability to generate approximants from  data having arbitrary geometry.
The case where the underlying structure are Euclidean spaces $\mathbb{R}^d$ or Euclidean spheres $\mathbb{S}^d$ has been studied in great detail over the last decade, 
see \cite{Buhm,Wend}.
Such kernel methods have found success in approximation theory (for treating high dimensional scattered data) and learning theory (in kernel based learning, support vector machines, neural networks, etc).

However, in various applications we are confronted with the situation that the underlying set is a compact or locally compact group and one is asked to propose suitable approximation procedures on these structures (this is a setting with no natural dilation operator and no `regular grids').
Such problems arise in biochemistry, crystallography and robotics to name only a few. The monograph \cite{ChKy00} provides a great collection of scattered data approximation problems where different matrix groups are involved.

In view of applications, the rotation group $SO(3)$ is, without doubt, one of the most important groups.
There exist a wealth of scattered data approximation problems on $SO(3)$, cf. \cite{Bo07,SHFPP07,CSB05,YLV04}.
Such problems have attracted significant recent attention from the mathematical community, which has investigated approximation via Wigner functions
\cite{GrPo09,PoPrVo09,GrKu08,KoRe08,Schmid1}, and kernels \cite{Gu96,Schmid2}.

The goal of this article is to introduce a family of kernels on the rotation group suitable for approximation and to investigate the approximation power of this family by estimating
the performance of a (theoretical) approximation scheme.
The kernels we present are notable in many respects: 
\begin{itemize} 
\item they are {\em conditionally  positive definite}, have a simple, closed form expression, and are consequently well-suited for implementation (for, say, interpolation or least squares approximation); 
\item they invert iterated, perturbed Laplace \!--\! Beltrami operators, tying them directly to successful kernel based approximation in other settings (cf.  {\it surface spline} approximation on $\reals^d$ \cite{D1},\cite{D2}, restricted surface splines on spheres \cite{Hang}, and, of course,  polynomial splines in one dimension); 
\item they have excellent scaling properties, much like the surface splines in the Euclidean case, suggesting that they can adjust themselves to data whose density varies spatially, as in \cite{DeRo}.
\end{itemize}
In addition to these three points, we study the approximation power of the spaces
generated by the kernels. We develop an approximation scheme and accompanying error estimates derived from the kernel's role as a Green's function. 
To be sure,  the scheme we develop is theoretical and does not immediately lend itself to direct implementation, but the accompanying error estimates are of a type that is notoriously difficult for kernel based approximation when the kernel is not dilated. This is true even in the Euclidean case, as in radial basis function (RBF) approximation. 
The results we are after give precise error rates for functions at all appropriate levels of smoothness; we determine $L_p$ rates of convergence commensurate with the $L_p$ smoothness of the function being approximated. 
We note that the results we seek are well
known for kernel approximation where dilations and regular grids are employed 
 \cite{BDR, LJCh},
and even without grids, \cite{Kyri, Jia}.
However, in the absence of dilation, error estimates are often known only for target functions residing in a reproducing  kernel Hilbert space.

%As already mentioned in this article we want to focus on the rotation group $SO(3)$ and so 
We have organized the article as follows. In the next section we give the necessary results of analysis on the rotation group. 
In Section 3 we introduce the kernels and give their expansions in terms of Wigner D-functions,  the basic elements of Fourier analysis on the group. From this we are able to show that the kernels are conditionally positive definite, and, hence are capable of solving interpolation and other scattered data fitting problems. The expansion also allows us to identify the integral identities these kernels satisfy.
Section 4 establishes the basic strategy to localize a kernel: by replacing a given kernel by a linear combination of its nearby copies. A precise estimate of the error in making this exchange is then given.
In Section 5 we propose an approximation scheme and present error estimates that demonstrate the approximation power of the kernels. Finally, Section 6 involves a discussion of potential generalizations of this method to other groups and homogeneous spaces. Recent work 
on bounding Lebesgue constants for kernel interpolation is discussed, which indicates that the theoretical error estimates may also hold for more practical schemes like interpolation and least squares approximation.

\section{Background}

%
%
%
%{\bf  Please rewrite this section as you see fit. I've tried to write down what I was able to understand from talking to you and reading the article you gave me. I'm assuming the following, which I will rely on you to verify and to provide exposition for:}
In this section, we collect some basic material and necessary notation on the rotation group to keep the paper self-contained.
\subsection{Basic Facts}\label{ss_basic_facts}
Let $SO(3):=\{\zb{x}\in\mathbb{R}^{3\times 3}:\zb{x}^T\zb{x}=\zb{e}, {\rm det} \,\zb{x}=1\}$ denote the non-Abelian compact group of proper rotations in the Euclidean space $\mathbb{R}^3$ 
and let $\mu$ denote the normalized Haar measure on $SO(3)$, i.e., we have $\mu(SO(3))=1$.
There are various ways to parameterize this group. 
An important parametric form for the rotation group uses the so-called Euler angles $(\varphi_1,\theta,\varphi_2)\in[0,2\pi)\times[0,\pi]\times[0,2\pi)$.
In the literature there are miscellaneous conventions of Euler angles. In this paper we follow the conventions made in \cite[Section 1.2]{GMS63}.
There the Euler angles are defined such that we can write every $\bx\in SO(3)$ as
\begin{equation}\label{defi_Euler_angles}
\zb{x}=\zb{x}(\varphi_1,\theta,\varphi_2)=\zb{s}_z[\varphi_1]\zb{s}_x[\theta]\zb{s}_z[\varphi_2],
\end{equation}
where
$$
\zb{s}_z[t]=\left(\begin{array}{ccc}\cos\, t&-\sin\, t&0\\\sin\, t&\cos\, t&0\\ 0&0&1 \end{array}\right),\qquad 
\zb{s}_x[t]=\left(\begin{array}{ccc}1&0&0\\0&\cos\, t&-\sin\, t\\ 0&\sin\, t&\cos\, t  \end{array}\right)
$$
are rotations about the $z$-axis and the $x$-axis, respectively.

Any integral over $SO(3)$ (or portions thereof) will always be assumed to be the Haar integral. 
Using Euler angles, the Haar integral of a function $f$ on $SO(3)$ reads as 
\begin{equation}\label{haar-integral}
\int_{SO(3)}f(\zb{x}){\dif}\mu(\zb{x})=\frac{1}{8\pi^2}\int_0^{2\pi}\int_0^\pi\int_0^{2\pi} \emph{f} \,(\varphi_1,\theta,\varphi_2)\ \sin\theta\,{\dif}\varphi_1\,{\dif}\theta\,{\dif}\varphi_2.
\end{equation}

Besides the well-known parameterization via Euler angles, the parameterization via the three-dimensional projective space is also of some importance to us.
Let $\mathcal{K}_\pi$ be the closed ball in $\mathbb{R}^3$ of radius $\pi$ centered at the origin and identify antipodal points. This is the three-dimensional projective space. An element $\zb{x}\in SO(3)$ is identified with a point in the projective space $\mathcal{K}_\pi$ by $\zb{x}\to \omega{ \bf r}$ where ${ \bf r}$, satisfying $\zb{x}{ \bf r}={ \bf r}$ and $\|{ \bf r}\|=1$, is the {\em rotation axis} and $\omega$, which can be chosen in $[0,\pi]$, is the {\em rotation angle} of $\zb{x}$. 

Then it is easy to see that 
\begin{equation}\label{metric}
\dist(\zb{x},\zb{y}):=\omega(\zb{y}^{-1}\zb{x})
\end{equation}
defines a translation invariant metric on $SO(3)$.
Of course, this is not the only metric one can place on 
$SO(3)$, see \cite{Mitch} for a discussion of other metrics and sampling in $SO(3)$, but it is compatible with the Riemannian structure of the group and we will use this metric throughout this article. 
This leads to the usual definition of the ball centered at $\balpha$ having radius $\rho$: 
$B(\balpha,\rho) = \{\bzeta\mid \dist(\bzeta,\balpha)<\rho)$.
A consequence of
(\ref{haar-integral}) is that we can estimate the volume of a ball
$\mu\bigl(B(\balpha,\rho)\bigr) = \int_{SO(3)}\mathbf{1\hspace{-1.2mm}l}_{B(\balpha,\rho)}(\bx)\dif \mu(\bx)$, 
obtaining constants $b_1:= 2/(3\pi^3)$ and $b_2 := 1/(6\pi)$ so that for any $0<\rho<\pi$ and 
any $\balpha\in SO(3)$,
$$b_1 \rho^3 \le \mu\bigl(B(\balpha,\rho)\bigr) \le b_2 \rho^3.$$
We utilize the notation $\omega$ in the context of the rotation angle and $\dist$ in the context of the metric. 

We can use this metric in order to quantify the distribution of the centers in the set
$$
\zb\Xi:=\{\bxi_j\in SO(3): j=0,\ldots,M-1\}.
$$
To do so we define two parameters. The first one is the separation distance
$$
q=q(\zb\Xi):=\min_{0\leq i< j\leq M-1}\dist(\bxi_i,\bxi_j)
$$
which measures `clustering trends' in $\zb\Xi\subset SO(3)$. 
On the other hand, the fill distance
$$
h=h(\zb\Xi):=\max_{\zb y\in SO(3)}\min_{j=0,\ldots,M-1}\dist(\bxi_j,\zb y)
$$
describes the density of $\zb\Xi$ in $SO(3)$.

\subsection{Harmonic Analysis and Kernels}
The basic building blocks in harmonic analysis on topological groups are the irreducible unitary representations.
On $SO(3)$, it is possible to compute a complete set of irreducible unitary representations, explicitly.
More precisely, using the quasi-left regular representation of $SO(3)$ on $L_2(\mathbb{S}^2)$ one is able to compute for each $\ell\in\mathbb{N}_0$ a $(2\ell+1)\times(2\ell+1)$-matrix $D^\ell$ 
such that these operators $D^\ell,\ell\in\mathbb{N}_0$, form a complete set of irreducible unitary representations of  $SO(3)$.
The matrix elements $D_{k,m}^\ell$ in the canonical basis are often called Wigner-D functions of degree $\ell$ and orders $k$ and $m$. In terms of Euler angles these functions are given by
\begin{equation}\label{explicit_form_matrix_elements_SO(3)}
D_{k,m}^\ell(\bx)=D_{k,m}^\ell(\varphi_1,\theta,\varphi_2)=e^{-ik\varphi_1}P_{k,m}^\ell(\cos\theta)e^{-im\varphi_2},\quad -\ell\leq k,m\leq \ell,
\end{equation}
where the function $P_{k,m}^\ell$ is given by 
$$
P_{k,m}^\ell(t)=C(1-t)^{-\frac{m-k}{2}}(1+t)^{-\frac{m+k}{2}}\frac{d^{\ell-m}}{dt^{\ell-m}}\Big((1-t)^{\ell-k}(1+t)^{\ell+k}\Big)
$$
with $C=\frac{(-1)^{\ell-k}i^{m-k}}{2^\ell(\ell-k)!}\sqrt{\frac{(\ell-k)!(\ell+m)!}{(\ell+k)!(\ell-m)!}}$.

We refer to \cite{GMS63} for all the details in these computations.
By means of the Peter-Weyl Theorem the Wigner-D functions, the matrix elements of the irreducible representations, constitute an orthogonal basis of the $L_2(SO(3))$. 
Hence, every $f \in L_2(SO(3))$ can be expanded in a $SO(3)$ Fourier series
$$
f = \sum_{\ell \in \mathbb{N}_0} \sum_{k,m=-\ell}^\ell\sqrt{2\ell+1}\, \hat{f}_{k,m}^\ell D_{k,m}^\ell
$$
with $SO(3)$ Fourier coefficients
$  \hat f_{k,m}^\ell= \sqrt{2\ell+1} \int_{SO(3)} f (\bx) \overline{D_{k,m}^\ell(\bx)} d\mu(\bx).$
On the other hand, to each eigenvalue, $\nu_{\ell} := \ell(\ell+1)$, of the Laplace \!--\! Beltrami operator $\Delta$ on  $SO(3)$, there corresponds an eigenspace of dimension 
$(2\ell+1)^2$ 
with orthogonal basis $(D_{k,m}^{\ell})_{k,m}$, i.e. we have for all $\ell\in\mathbb{N}_0$ and all $k,m=-\ell,\ldots,\ell$ that
\begin{equation}\label{LaBe}
\Delta\,D_{k,m}^\ell=\ell(\ell+1)D_{k,m}^\ell.
\end{equation}
%see \cite{Mora04}. 
We denote this eigenspace by $\mathcal{H}_{\ell},$ and we refer to  ${\zb \Pi}_n =\bigoplus_{\ell = 0}^n \mathcal{H}_{\ell}$ as the polynomials of degree no more than $n$.

The kernels we study in this article possess a certain symmetry; they are functions of the distance between the two arguments, 
${\rm dist}(\bx,\zb\alpha)=\omega(\balpha^{-1}\bx)\in [0,\pi]$ only. 
By the bijectivity of $\sin\left( \frac{\cdot}{2}\right)$ and $\cos\left( \frac{\cdot}{2}\right)$ on $[0,\pi]$ we are entitled to express these kernels as $k(\bx,\balpha) = \phi(\sin ( \frac{\omega(\balpha^{-1} \bx)}{2})) = \psi(\cos( \frac{\omega(\balpha^{-1} \bx)}{2})) $, where $\phi, \psi :[0,1] \to \reals$. There are benefits to both alternatives, as we shall see in the following sections.

On $SO(3)$ one easily checks that the functions that only depend on the rotational angle of the argument coincide exactly with the class functions -- functions that are constant on conjugacy classes, i.e. $f(\bx)=f(\zb y\bx \zb y^{-1})$ for all $\bx,\zb y\in SO(3)$. 
%Such kernels, being {\it class functions} -- functions invariant under conjugation of the argument, and, hence, depending only on the rotational angle (which, on $SO(3)$ is also the Riemannian distance) -- 
In other words, for any class function $f$ on $SO(3)$ there is a uniquely determined $\widetilde{f}:[0,\pi]\to\mathbb{C}$ such that $f(\bx)=\widetilde{f}(\omega(\bx))$ for all $\bx\in SO(3)$.
The Haar integral for such a class function reads as
\begin{equation}\label{Haar_integral_class_SO(3)}
\int_{SO(3)}f(\bx)\dif\mu(\bx)=\int_{SO(3)}\widetilde{f}(\omega(\bx))\dif\mu(\bx)=\frac{2}{\pi}\int_0^\pi\widetilde{f}(t)\sin^2\left(\frac{t}{2}\right)\dif t.
\end{equation}

We make the convention that if $f$ is a class function on $SO(3)$ and if no confusion occurs, we subsequently drop the tilde.

Since the characters $\mathfrak{c_{\ell}} = \tr D^{\ell}, \, \ell \in \nat_0,$ of $SO(3)$ form an orthonormal basis for the space of class functions in $L_2(SO(3))$,
we can decompose any continuous kernel possessing the symmetry mentioned above as
$k(\bx,\balpha) = \sum _{\ell = 0}^{\infty} \widetilde{k}(\ell) \mathfrak{c_{\ell}}(\balpha^{-1}\bx)$.
Using the formula 
$\mathfrak{c}_{\ell} (\balpha^{-1}\bx)=   {\mathcal U}_{2\ell} \bigl(\cos(\frac{\omega(\balpha^{-1}\bx)}{2})\bigr)$, 
such kernels can be expressed as an expansion of even powered Chebyshev polynomials of the second kind,
\begin{equation}\label{Cheb}
k(\bx,\balpha) = \sum _{\ell = 0}^{\infty} \widetilde{k}(\ell) \mathfrak{c_{\ell}}(\balpha^{-1}\bx)= \sum_{\ell = 0}^{\infty} \widetilde{k}(\ell) {\mathcal U}_{2\ell} \left(\cos\left(\frac{\omega(\balpha^{-1}\bx)}{2}\right)\right).\end{equation}
Alternatively, we can write:
%
%Kernels & Addition theorem
%
\begin{eqnarray}
k(\bx,\zb  \alpha) 
&=&  \sum_{\ell = 0}^{\infty} 
 \widetilde{k}(\ell) {\mathfrak c}_{\ell}(\balpha^{-1}\bx)%\nonumber\\
=%  &=& 
 \sum_{\ell = 0}^{\infty}
\widetilde{k}(\ell) \tr(D^{\ell}(\balpha^{-1}\bx))\nonumber\\
 &=& 
 \sum_{\ell = 0}^{\infty}
\widetilde{k}(\ell) \tr\left({D^{\ell}(\balpha)}^{*}\,D^{\ell}(\bx)\right)\nonumber\\
&=&
\sum_{\ell = 0}^{\infty}\sum_{\iota=-\ell}^{\ell} \sum_{\nu=-\ell}^{\ell} 
\widetilde{k}(\ell) D_{\iota,\nu}^{\ell}(\bx)\overline{D_{\iota,\nu}^{\ell}(\balpha)}.\label{Addn}
\end{eqnarray}
This is sometimes called the addition formula for Wigner D-functions. 

The expression of the kernel as a series of Chebyshev polynomials \eqref{Cheb}
as well as the addition formula \eqref{Addn} are of prime importance in the study of kernels on $SO(3)$. The first is used to understand the kernel as the fundamental solution of a simple differential operator, which is the goal of the next section, while the second is key to localizing the kernel, which is tackled in Section 4.

%Any integral over $SO(3)$ (or portions thereof) will always be assumed to be the Haar integral.

%Of special interest are the ``class functions'', those functions on $SO(3)$ that
%depend only on the conjugacy class. I.e., functions $f$ for which $f(x) = f(\alpha^{-1} x \alpha)$ for all $x,\alpha\in SO(3)$. Such functions can actually be expressed as $f=g\circ \omega$, where $g:[0,\pi] \to \comps$. For class functions, the Haar measure reduces to 
%$\tfrac{2}{\pi}\sin^2\tfrac{\omega}{2}$ in the sense that
%
%\begin{equation}\label{Haar_Class} 
%\int_{SO(3)} f(\alpha) \dif \alpha = \frac{2}{\pi}\int_0^\pi g(\omega)\sin^2 \frac{\omega}{2}\, \dif \omega.
%\end{equation}

%
%
\subsection{Smoothness Spaces} \label{ss_smoothness_spaces} Because we are interested in approximating in the $L_p\bigl(SO(3)\bigr)$ metric, we must introduce $L_p$ smoothness spaces on $SO(3)$. This is done in two stages. In the first stage, we introduce the (simpler) Sobolev and H{\" o}lder spaces, whose definitions are accessible, if not familiar to most readers. The second stage is postponed until Section 5, where we introduce Besov spaces. 

Sobolev spaces on manifolds have a variety of equivalent characterizations. Perhaps the most straightforward of these is to import the definition from Euclidean space via a partition of unity and a corresponding set of diffeomorphisms. This is the approach taken in \cite[1.11]{Trieb}.
The Sobolev space on a region $\mathcal{O}$ of $\reals^3$ is defined to be the space of 
functions $f\in L_p(\mathcal{O})$ such that the quantity
\begin{equation}\label{Euc_Sob} 
\|f\|_{W_p^k( \mathcal{O})}^p
: = 
\sum_{\beta \le k} \int_{ \mathcal{O}} |D^{\beta}f(x)|^p \dif x 
\end{equation}
is finite. To transport this definition, let $(\psi_j)_{j=1}^N$ be a partition of unity of $SO(3)$, and let $(\Omega_j,h_j)_{j=1}^N$ be a corresponding collection
of charts (each $\Omega_j$ is an open set in $SO(3)$ containing the closure of $\supp {\psi_j}$ and each $h_j:\Omega_j \to \mathcal{O}_j\subset\reals^3$ is a diffeomorphism). 
\begin{definition}\label{Sobolev}
For $1\leq p<\infty$ the Sobolev space $W_p^k\bigl(SO(3)\bigr)$ consists of functions $f\in L_p$ such that
$$     \|f\|_{W_p^k}^p            %\|f\|_{W_p^k\bigl(SO(3)\bigr)}^p
:=\sum_{j=1}^N \|\bigl(\psi_j\circ (h_j^{-1})\bigr)\bigl(f\circ (h_j^{-1})\bigr)\|_{W_p^k(\mathcal{O}_j)}^p$$
is finite.
\end{definition}
In this way we transport the Sobolev space definition from Euclidean space. For smoothness measured in $L_{\infty}$, although the definition makes sense, we end up using the $C^k$ spaces, whose norm is defined according to the same procedure.

%
%
% 
% An alternative characterization is to take the subset of $L_p$ such that $\Delta^{k/2} f \in L_p$ (although this definition holds for $k\in \reals$, it is most simple when $k$ is a nonnegative, even integer, thereby ignoring fractional powers of $\Delta$. For our purposes we only need to consider even order Sobolev spaces.) 
An important observation, of use in future sections, is the mapping property of the Laplace \!--\! Beltrami operator. For $j\le k/2$,
 $$\|\Delta^j f \|_p \lesssim \|f\|_{W_p^k}. $$
 This follows from the well known expression of $\Delta$ in local coordinates \cite[7.2.5]{Trieb}: namely
$(\Delta f) \circ h = L (f\circ h)$ where  
$$L = \frac{1}{\sqrt{\det g}}\sum_{j,k=1}^d\frac{\partial}{\partial x_j}\left( \sqrt{\det g} g^{jk} 
\frac{\partial}{\partial x_k}\right)$$ 
is a second order differential
operator having smooth coefficients (here $g$ is an invertible matrix having
smooth entries and $g^{jk}$ is the $j,k$ entry of its inverse).

Another useful observation concerns the density of $C^{\infty}$ functions in 
$W_p^k(SO(3))$, (and in $C^{k}(SO(3))$ when $p=\infty$), which follows from the density
of such functions in $\reals^3$. 
Each real valued function $\bigl(\psi_j f\bigr)\circ (h_j^{-1})$ can be approximated with error $\varepsilon$ in $W_p^k$ by a function
$g_j:\mathcal{O}_j\to \reals$ (via mollification, for instance). 
The desired smooth function is 
$$ 
G := \sum_{j=1}^N \tilde{\psi}_j  \times G_j 
:=  \sum_{j=1}^N\tilde{\psi}_j  \times\bigl( g_j\circ h_j\bigr).$$ 
Here we utilize a 
second family of smooth functions: each
$\tilde{\psi}_j: SO(3)\to \reals$ is a $C^{\infty}$  cut-off function satisfying 
$\supp {\tilde{\psi}_j} \subset \Omega_j$
 and $\tilde{\psi}_j(\balpha) = 1$ for $\balpha \in
\supp{\psi_j}$ 
(the function $\tilde{\psi}_j  \times G_j$ is extended by $0$ outside of
$\Omega_j$). It follows that 
$G - f = \sum_{j=1}^N  \tilde{\psi}_j (G_j - \psi_j f )$
and
\begin{multline*}
\|G - f\|_{W_p^k(SO(3))}\\%&\le &\\
 \le
 \sum_{\ell=1}^N \sum_{j=1}^N
\left\|
  \left[
    \psi_{\ell} 
    \times 
    \tilde{\psi}_j
    \times 
    \left\{g_j - (\psi_j\times f) \circ (h_j^{-1})\right\}
     \circ h_j %\bigr)
  \right]
    \circ h_{\ell}^{-1}   
\right\|_{W_p^k(\mathcal{O}_{\ell})}\\
\le
\sum_{\ell,j=1}^N %\sum_{j=1}^N
C_{\ell,j}
  \left\|    \left(\psi_{\ell} \times \tilde{\psi}_j \right)\circ h_{\ell}^{-1}  \right\|_{C^k(\Omega_{\ell})}
  \left\|g_j - (\psi_j\times f) \circ (h_j^{-1})\right\|_{W_p^k(\mathcal{O}_j)}
%&\mbox{}&+
%\sum_{j=1}^N
%\sum_{k\ne j}\|\bigl(\psi_j\times\tilde{\psi}_k\times
%\bigl( g_k\circ h_k\bigr)\bigr)\circ (h_j^{-1})\|_{W_p^k(\mathcal{O}_j)}
\end{multline*}
where the last line follows from the fact that each $h_j\circ h_{\ell}^{-1}$ is a diffeomorphism
from $h_{\ell}(\Omega_j\cap\Omega_{\ell})$ to $h_{j}(\Omega_j\cap\Omega_{\ell})$.
Therefore, $\|G - f\|_{W_p^k(SO(3)} \le C \varepsilon$, where
$C$ depends only on the two families of smooth functions 
$(\psi_j)_{j=1}^N$ and $(\tilde{\psi}_j)_{j=1}^N$, and on the diffeomorphisms $h_j$. 
%(Note, in particular,
%that the `off-diagonal' terms 
%$
%\|\bigl(
%    \tilde{\psi}_j  \times \psi_k \times 
%   g_j\circ h_j
%  \bigr)\circ h_k^{-1}\|_{W_p^k(\mathcal{O}_k)}$

%
%
%
\section{Surface Splines on $SO(3)$}

In this article we investigate approximation properties of the {\em surface splines} (perhaps more accurately the `rotational surface splines'), which are the functions derived from $\phi_s(t) := t^{2s}$ or, alternatively $\psi_s (t)=(1-t^2)^s$ with the index, $s$, a positive, pure half integer (which gives the order of zero of the kernel or its smoothness) related to the `order' of the kernel by $s = s(m) = \frac{2m-3}{2}$. We have,
\begin{eqnarray}\label{surface_spline}
k_m(\bx,\balpha) &:=& 
               \left(\sin \left(\frac{\omega(\balpha^{-1}\bx)}{2}\right)\right)^{2m-3} = \phi_{s}\left( \sin \left(\frac{\omega(\balpha^{-1}\bx)}{2}\right)\right)\\
                &=& 
                  \left(1- \cos^2 \left(\frac{\omega(\balpha^{-1}\bx)}{2}\right)\right)^{\frac{2m-3}{2}}=\psi_{s}\left(\cos\left(\frac{\omega(\balpha^{-1}\bx)}{2}\right)\right).\nonumber
\end{eqnarray}
To ensure continuity of the kernel, we will assume throughout the article that $m\ge 2$.

An analysis of their Wigner D-function expansions will reveal two remarkable qualities.
First, the kernels are conditionally positive definite; this property and its consequences
are discussed in \ref{ss_cpd}. 
Second, they satisfy integral identities, valid for $f\in W_p^{2m}(SO(3))$, 
of the form
\begin{equation}\label{rep}
f = \int_{SO(3)} \L_m f(\balpha) k_m(\cdot,\zb  \alpha) \dif\mu(\balpha) 
\end{equation}
where $\L_m$ is an elliptic differential operator of order $2m$ on $SO(3)$. 
To be sure, there are many kernels satisfying a reproduction
formula like (\ref{rep}) and many of them are conditionally positive definite. 
However, in general, it is unclear that they will have a `nice,' closed form expression 
similar to (\ref{surface_spline}).
Similarly, there is no guarantee that they will have the property of localizability 
(developed in the next section),
which is key to our understanding of the approximation power of the kernel (as discussed
in Section \ref{s_main}).

We organize the section as follows. In \ref{ss_cheb}, we expand (\ref{surface_spline})
in terms of Chebyshev polynomials. This expansion is used to show that the 
kernels are conditionally positive definite, which leads to a discussion of some
practical approximation schemes. This is treated in \ref{ss_cpd}. In \ref{ss_polyharmonic}
we establish the reproduction formula (\ref{rep}) and demonstrate that the 
underlying differential operator $\L_m$ is of the form $p(\Delta)$, a degree $m$ polynomial in the Laplace--Beltrami operator.
\subsection{Chebyshev coefficients of the surface splines}\label{ss_cheb}
%Our investigation of  $\psi_s$ begins with studying its decomposition in even Chebyshev polynomials (equivalently, the decomposition of $k_m$ in characters).
Our investigation of $k_m$ begins with studying its decomposition in characters (equivalently, the decomposition of $\psi_s$ in even Chebyshev polynomials).
\begin{lemma}\label{ss_coeff}
For $m\ge2$,  
%there is a positive constant $C_{m}$ such that 
the $\ell^{\mathrm{th}}$ coefficient of $k_m$ in its expansion in terms of characters, cf. \eqref{Cheb}, is
\begin{equation}
\widetilde{k}_m(\ell) =  \frac{2}{\pi} \frac{(2m-2)!}{(-4)^{m-1}}\,\prod_{j=-(m-1)}^{m-1} [\ell +j+\tfrac12]^{-1}.
\end{equation}
\end{lemma}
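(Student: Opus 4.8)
The plan is to compute the inner product $\langle k_m, \mathfrak{c}_\ell\rangle_{L_2(SO(3))}$ directly, using the class-function Haar integral formula \eqref{Haar_integral_class_SO(3)} together with the identification $\mathfrak{c}_\ell(\bx)=\mathcal{U}_{2\ell}(\cos(\omega(\bx)/2))$. Writing $t$ for the rotation angle and substituting $u=\cos(t/2)$ (so that $\sin(t/2)=\sqrt{1-u^2}$, $\sin^2(t/2)\,\dif t = -2u\sqrt{1-u^2}\,\dif u$ after also using $\dif t = -2\,\dif u/\sqrt{1-u^2}$), the coefficient $\widetilde{k}_m(\ell)$ becomes, up to the normalizing constant $2/\pi$ from \eqref{Haar_integral_class_SO(3)}, an integral of the form $\int_{-1}^{1}(1-u^2)^{s}\,\mathcal{U}_{2\ell}(u)\,(1-u^2)\,\dif u$ where $s=(2m-3)/2$, i.e. $\widetilde k_m(\ell) = \tfrac{2}{\pi}\int_{-1}^1 (1-u^2)^{m-1/2}\,\mathcal{U}_{2\ell}(u)\,\dif u$ after collecting the exponents (the weight $(1-u^2)^{1/2}$ appears once from $\sin^2(t/2)$ having a surviving factor and once being cancelled; I would be careful with the bookkeeping here, but the exponent of $(1-u^2)$ should end up as $s+\tfrac12 = m-1$ multiplying the natural Chebyshev weight $(1-u^2)^{1/2}$, or equivalently $m-\tfrac12$ with no weight).

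Next I would evaluate $\int_{-1}^1 (1-u^2)^{m-1/2}\,\mathcal{U}_{2\ell}(u)\,\dif u$. Since $\mathcal{U}_{2\ell}$ is even, substitute $u=\cos\vartheta$ to turn this into $\int_0^\pi (\sin\vartheta)^{2m-1}\,\mathcal{U}_{2\ell}(\cos\vartheta)\,\dif\vartheta \cdot(\text{correction})$; using $\mathcal{U}_{2\ell}(\cos\vartheta)=\sin((2\ell+1)\vartheta)/\sin\vartheta$, this reduces to the classical integral $\int_0^\pi (\sin\vartheta)^{2m-2}\sin((2\ell+1)\vartheta)\,\dif\vartheta$. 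There are several clean ways to get a closed form: expand $(\sin\vartheta)^{2m-2}$ via the binomial theorem in exponentials and integrate termwise, or — more efficiently — recognize that this is (proportional to) a known beta-type integral whose value is $\pi\,\frac{\Gamma(2m-1)}{\Gamma(m-\ell-\tfrac12)\Gamma(m+\ell+\tfrac12)}$ up to powers of $2$. The Gamma-function ratio $\frac{\Gamma(2m-1)}{\Gamma(m+\ell+\frac12)\Gamma(m-\ell-\frac12)}$ is exactly what produces the finite product $\prod_{j=-(m-1)}^{m-1}[\ell+j+\tfrac12]^{-1}$ after one rewrites $\Gamma(m+\ell+\tfrac12)/\Gamma(\ell-m+\tfrac32) = \prod_{j=-(m-1)}^{m-1}(\ell+j+\tfrac12)$ (a product of $2m-1$ consecutive shifted terms) and absorbs the reflection identity relating $\Gamma(m-\ell-\tfrac12)$ to $\Gamma(\ell-m+\tfrac32)$, which contributes the sign $(-1)^{m-1}$ and powers of $2$; collecting $(2m-2)!=\Gamma(2m-1)$ and the accumulated factors of $2$ and signs gives the stated constant $\frac{2}{\pi}\frac{(2m-2)!}{(-4)^{m-1}}$.

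The main obstacle I anticipate is purely bookkeeping: getting every constant, sign, and power of two correct through the chain of substitutions ($t\mapsto u\mapsto\vartheta$), the Haar normalization $2/\pi$, the conversion between the Chebyshev-$\mathcal{U}$ normalization and $\sin((2\ell+1)\vartheta)/\sin\vartheta$, and especially the Gamma-reflection step that turns $\Gamma(m-\ell-\tfrac12)$ (which has poles!) into a product — here one must be careful that for $\ell < m-\tfrac12$ the naive product $\prod_{j=-(m-1)}^{m-1}(\ell+j+\tfrac12)$ passes through zero, so the identity should be read as: the integral itself vanishes for those $\ell$ precisely because $(1-u^2)^{m-1/2}$ is a polynomial-times-weight of degree $2m-1 > 2\ell$ orthogonal to $\mathcal{U}_{2\ell}$, matching the zero of the product. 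Establishing that the closed-form product expression is valid for all $\ell\ge 0$ (including the range where it vanishes) is the only genuinely delicate point; everything else is routine integration.
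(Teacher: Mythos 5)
Your main line of computation is essentially the paper's: both reduce $\widetilde{k}_m(\ell)$ via the class-function Haar formula \eqref{Haar_integral_class_SO(3)} and the identity $\mathcal{U}_{2\ell}(\cos\vartheta)=\sin((2\ell+1)\vartheta)/\sin\vartheta$ to the integral $\tfrac{2}{\pi}\int_0^{\pi}(\sin\vartheta)^{2m-2}\sin((2\ell+1)\vartheta)\,\dif\vartheta$ (the paper writes this as $\tfrac{4}{\pi}\int_0^{\pi/2}$, the same thing by symmetry about $\pi/2$, and reaches it directly without your detour through $u=\cos(t/2)$). From there the paper expands $(\sin\vartheta)^{2m-2}\sin((2\ell+1)\vartheta)$ into a sum of sines, integrates termwise, and collapses the resulting alternating binomial sum using $\sum_{j=0}^{M}(-1)^j\binom{M}{j}(L+j)^{-1}=M!/\bigl(L(L+1)\cdots(L+M)\bigr)$; your Beta/Gamma evaluation of the same integral is an equally legitimate finish and yields the identical product.

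However, the ``only genuinely delicate point'' you single out at the end is not delicate --- it does not exist --- and your proposed resolution of it is wrong. Every factor $\ell+j+\tfrac12$ with $\ell,j$ integers is a nonzero half-integer, so the product $\prod_{j=-(m-1)}^{m-1}(\ell+j+\tfrac12)$ never vanishes; likewise $\Gamma(m-\ell-\tfrac12)$ is only ever evaluated at half-integers and therefore never meets a pole. Hence $\widetilde{k}_m(\ell)\neq 0$ for \emph{every} $\ell\ge 0$, including $\ell\le m-2$: for instance $\widetilde{k}_2(0)=\tfrac{2}{\pi}\cdot\tfrac{2}{-4}\cdot\bigl[(-\tfrac12)(\tfrac12)(\tfrac32)\bigr]^{-1}=\tfrac{8}{3\pi}$. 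Your orthogonality argument fails because, after extracting the Chebyshev weight $\sqrt{1-u^2}$, the remaining factor is $(1-u^2)^{m-3/2}$, which is not a polynomial, so there is nothing for $\mathcal{U}_{2\ell}$ to be orthogonal to. This is more than bookkeeping: the nonvanishing of all coefficients is precisely what makes $k_m$ invert $\L_m$ in Lemma \ref{ss_is_Green} and underlies the conditional positive definiteness discussion; asserting that the low-order coefficients vanish would contradict the rest of the paper. Drop that paragraph and the proof is sound.
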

\begin{proof}
We consider $k_m(\bx,\balpha)=k_m(\zb\alpha^{-1}\bx)=\sum_{\ell = 0}^{\infty} \widetilde{k}_m(\ell) \mathfrak{c}_\ell(\zb\alpha^{-1}\bx)$ and we want to determine the coefficients $\widetilde{k}_m(\ell)$.  Utilizing the fact that the characters
are orthonormal we wish to compute 
$$
\widetilde{k}_m(\ell) = \int_{SO(3)}k_m(\bx)\overline{\mathfrak{c}_\ell(\bx)}\dif\mu(\bx).
$$
By a change of variable, we get 
$$
\widetilde{k}_m(\ell)=\frac{2}{\pi} \int_{0}^{\pi}\left(\sin\left(\frac{\omega}{2}\right)\right)^{2m-3}  \mathcal{U}_{2\ell}\left(\cos \left(\frac{\omega}{2}\right)\right)  \sin^2\left(\frac{\omega}{2}\right) \dif \omega.$$
Rewriting $ \mathcal{U}_{2\ell}\left(\cos \left(\frac{\omega}{2}\right)\right)$ in the familiar form
$$\mathcal{U}_{2\ell}\left(\cos \left(\frac{\omega}{2}\right)\right) = 
\frac{\sin \left((2\ell+1)\frac{\omega}{2}\right)}{\sin \left(\frac{\omega}{2}\right)}$$
and applying another change of variable $\theta = \omega/2$, we obtain:
$$\widetilde{k}_m(\ell) = \frac{4}{\pi} \int_{0}^{\pi/2}(\sin\theta)^{2m-2} \, \sin \bigl((2\ell+1)\theta \bigr)  \dif \theta.$$
Such integrals are easily attacked using simple trigonometric identities. They can also be reduced to the observation that repeatedly  multiplying a periodic function by $\sin \theta$ is equivalent to applying a difference operator to its Fourier coefficients.
%%%%%%%%%%%%%%%
%Trig Identity
%%%%%%%%%%%%%%%
Doing so, we obtain
\begin{equation*}\label{trig_id}
(\sin\theta)^{2M} \, \sin \bigl(L\theta \bigr) =\\
 \left(-\frac{1}{4}\right)^{M}
\sum_{j=0}^{2M}  (-1)^{j}\; {{2M}\choose j} \sin\bigl( [L -(2M-2j)]\theta\bigr).
\end{equation*}
Applying this identity %(\ref{trig_id}), 
with $M=m-1$ and $L=2\ell+1$, and integrating each term
$\sin\bigl( [2\ell+3 -(2m-2j)]\theta\bigr)$ over the interval $[0,\frac{\pi}{2}]$,
 we see that 
\begin{eqnarray*}
\widetilde{k}_m(\ell) 
&=& 
%\left(-\frac{1}{4}\right)^{m-1} \frac{4}{\pi}
%  \sum_{j=0}^{2m-2}  
%  (-1)^{j}  {{2m-2}\choose j}\;
%  \int_{0}^{\pi/2}
%     \sin\bigl( [2\ell+3 -(2m-2j)]\theta\bigr)
%  \dif \theta\\
%&=& 
\left(-\frac{1}{4}\right)^{m-1} \frac{4}{\pi}
\sum_{j=0}^{2m-2} 
  \frac{ (-1)^{j}}{ 2\ell+3 - (2m-2j) }
  {{2m-2}\choose j} \\
 &=&\left(-\frac{1}{4}\right)^{m-1} \frac{2}{\pi}
\sum_{j=0}^{2m-2} 
  \frac{ (-1)^{j}}{ \ell-m+\tfrac{3}{2} +j }
  {{2m-2}\choose j} .
\end{eqnarray*}
%%%
%Begin intro to Second Claim
%%%
Hence, $\widetilde{k}_m(\ell)$ is a difference operator applied to the rational function
$x\mapsto x^{-1}$. We simplify this type of expression in the elementary formula:
%%%%%%%%%%
%Difference Claim
%%%%%%%%%%
\begin{equation}\label{diff}
\sum_{j=0}^M (-1)^j {M\choose j}\frac{1}{L+j} 
= 
\frac{ M!}{(L)(L+1)\dots(L+M-1)(L+M)}.
\end{equation}
%%%
%End Second Claim
%%%
Thus, we can simplify $\widetilde{k}_m(\ell)$, by using (\ref{diff}) with $L=\ell-m+\tfrac{3}{2}$ and $M=2m-2$:
$$\widetilde{k}_m(\ell) =
 \frac{2}{\pi}\left(-\frac{1}{4}\right)^{m-1}\frac{(2m-2)!}{(\ell -m+3/2)\dots(\ell+m-1/2)},$$
which proves the lemma. 
\end{proof}
%%%
%End Fourier Coefficient Lemma
%
%
\subsection{Treating data with surface splines}\label{ss_cpd}
At this point we observe that the kernels $k_m$ are each conditionally positive definite of
order $\ell_0\ge m-2$, and, hence, well suited for solving interpolation problems.
Being conditionally positive definite of order $\ell_0$ means that for any $\zb \Xi \subset SO(3)$, 
%the quadratic form $\zb a^T \zb A \zb a = \sum_{\zb \zeta} \sum_{\zb \xi} \zb a_{\bxi} \zb A_{\bxi,\zb \zeta} \zb a_{\zb \zeta}$ is positive definite on the subspace defined by $\zb B \zb a = 0$, where 
 $\sum_{\zb \zeta\in\zb\Xi} \sum_{\zb \xi\in\zb\Xi} \zb a_{\bxi} \overline{\zb a_{\zb \zeta}}\zb A_{\bxi,\zb \zeta} > 0$ for any $\zb a\neq \zb0$ in the subspace defined by $\zb B \zb a = \zb 0$, where
\begin{equation}\label{E_collocation}
\zb A := [k_m(\bxi,\zb \zeta)]_{\bxi,\zb \zeta \in \zb \Xi}
\quad\text{and}\quad
\zb B := [D^{\ell}_{\iota,\nu}(\bxi)]_{\ell=0,\ldots,\ell_0,-\ell\leq\iota,\nu\leq\ell;\bxi \in \zb \Xi}.
\end{equation}
%\begin{eqnarray*}
%\zb A&=&[k_m(\bxi,\zb \zeta)]_{\bxi,\zb \zeta \in \zb \Xi}\\
%\zb B&=&[D^{\ell}_{\iota,\nu}(\bxi)]_{\ell=0,\ldots,\ell_0,-\ell\leq\iota,\nu\leq\ell;\bxi \in \zb \Xi}.
%\end{eqnarray*}

Indeed, conditional positive definiteness follows directly from 
the fact that
$\widetilde{k}_m(\ell)$ does not change sign for $\ell\ge m-2$
 (see \cite[Proposition 4.3]{DNW}).
%Without loss of generality we may assume assume that $\widetilde{k}_m(\ell)>0$ for all $\ell\geq\ell_0$. 

We now present three classical, practical approximation schemes involving the kernel $k_m$. Namely, we approximate by functions of the form
\begin{equation}\label{interpolant}
s_{\zb\Xi}=\sum_{\bxi\in\zb \Xi}\alpha_{\bxi} k_m(\cdot, \bxi)+\sum_{\ell=0}^{\ell_0}\sum_{\iota,\nu=-\ell}^\ell \beta_{\iota,\nu}^\ell D^\ell_{\iota,\nu},
\end{equation}
where the kernel coefficients satisfy
\begin{equation}\label{aux}
\sum_{\bxi \in\zb \Xi}\alpha_{\bxi}D^{\ell}_{\iota,\nu}(\bxi)=0,\quad 0\leq\ell\leq\ell_0;-\ell\leq\iota,\nu\leq\ell.
\end{equation}

\noindent{\bf Interpolation} We define the interpolant to a function $f$ at the centers 
$\zb\Xi$ as the function of the form (\ref{interpolant}) %=\{\bxi_j\in SO(3):j=0,\ldots,M-1\}$ as
where the interpolation conditions
$$
s_{f,\zb\Xi}(\bxi)=f(\bxi),\quad \bxi\in\zb\Xi,
$$
and auxiliary conditions (\ref{aux}) are satisfied.

The interpolant is determined by solving the system
\begin{equation}\label{intsystem}
\left(
\begin{array}{cc}
\zb A& \zb B^T\\ \zb B& \zb 0
\end{array}
\right)
\left(
\begin{array}{c}
\zb \alpha\\
\zb \beta
\end{array}
\right)=
\left(
\begin{array}{c}
\bf f\\
\zb 0
\end{array}
\right),
\end{equation} where  
%$\zb A=[k_m(\bxi_i,\bxi_j)]_{i,j=0,\ldots,M-1},\zb B=[D^{\ell}_{\iota,\nu}(\bxi_j)]_{\ell=0,\ldots,\ell_0,-\ell\leq\iota,\nu\leq\ell;j=0,\ldots,M-1}$ and 
${\bf f}=(f(\bxi))_{\bxi \in\zb \Xi}$. 
It is well-known that system \eqref{intsystem} is uniquely solvable for any set of centers $\zb\Xi$ that is unisolvent for the polynomial space $\zb\Pi_{\ell_0}$, see \cite[Chapter 8.5]{Wend} (we provide sufficient conditions for such sets, in terms of the fill distance,  in the next section). Moreover, for our kernels $k_m$, setting up this system is particularly easy (see Note 
\ref{euler_angle_collocation} 
%and \ref{axis_angle_collocation} 
below).
We refer the interested reader to \cite{Wend, DNW} for  background on interpolation by conditionally positive definite kernels.

\noindent{\bf Tikhonov regularization} For treating noisy data, a common approach is to find the kernel approximant
minimizing a certain quadratic form. For noisy data $( y_{\bxi})_{\bxi\in \zb \Xi}$ sampled at 
 $\zb \Xi\subset SO(3)$, 
we wish to solve the problem
$$
\min_{s \in W_2^m(SO(3))} \left(\sum_{\bxi \in \zb \Xi} |s(\bxi) -  y_{\bxi}|^2 + \lambda |s|_{\mathcal{N}}^2\right) 
$$
where we use the seminorm 
$|s|_{\mathcal{N}}^2 = \sum_{\ell\ge \ell_0}\sum_{-\ell\le \iota,\nu\le \ell} |\hat{s}_{\iota,\nu}^{\ell}|^2|\widetilde{k}_m(\ell)|^{-1}$
and a smoothing parameter $\lambda>0$ is chosen based on the level of noise inherent
in the data.
The minimizer is of the form (\ref{interpolant}).
Because $k_m$ is conditionally positive definite, 
the minimizer is determined by solving the system of equations 
\begin{equation}\label{intsystem1}
\left(
\begin{array}{cc}
\zb A + \lambda \mathrm{Id}& \zb B^T\\ \zb B& \zb 0
\end{array}
\right)
\left(
\begin{array}{c}
\zb \alpha\\
\zb \beta
\end{array}
\right)=
\left(
\begin{array}{c}
\bf y\\
\zb 0
\end{array}
\right).
\end{equation}
 See \cite[2.4]{Wahb} for a detailed discussion of this problem.

\noindent{\bf Least squares approximation} For a unisolvent set $\zb \Xi$, the space 
$$S_{\zb \Xi}:=\left\{ s=\sum_{\bxi\in\zb \Xi}\alpha_{\bxi} k_m(\cdot, \bxi)+ p
%\sum_{\ell=0}^{\ell_0}\sum_{\iota,\nu=-\ell}^\ell \beta_{\iota,\nu}^\ell D^\ell_{\iota,\nu} : 
\mid p\in \zb\Pi_{\ell_0},\  (\alpha_{\bxi})_{\bxi\in\zb\Xi} \ \,\text{satisfying} \ (\ref{aux})\right\}$$
has dimension $\#\zb\Xi$, and one can construct the least squares operator
$P_{\zb \Xi}$, which is the orthogonal projector $P_{\zb \Xi}:L_2(SO(3)) \to S_{\zb \Xi}$
with range $S_{\zb \Xi}$. 

One may calculate the least squares projector by using any basis $(v_{\bxi})_{\bxi}$ for 
$S_{\zb \Xi}$ (say the basis of Lagrange functions $\chi_{\bxi}$ where $\chi_{\bxi}(\bzeta) = \delta_{\bxi,\bzeta}$). 
In this case, 
the projector is simply 
$P_{\zb \Xi} f = \sum_{\bxi\in \zb\Xi} a_{\bxi} v_{\bxi}$, with coefficients $\zb a = (a_{\bxi})_{\bxi\in\zb \Xi}$
given by  $ \zb a =\zb G^{-1} \zb f$, where $\zb G_{\bxi,\bzeta} = \langle v_{\bxi},v_{\bzeta}\rangle$ is
the Gram matrix for the basis and
and $\zb f = \bigl(\langle v_{\bxi}, f\rangle\bigr)_{\bxi\in\zb \Xi}.$ Since the elements of
$S_{\zb \Xi}$ are continuous and $SO(3)$ is compact, the projector extends
as an operator to each space $L_p(SO(3)),$ $1\le p\le \infty$. The $L_p$ stability of such operators on compact Riemannian manifolds has recently been investigated in
\cite{HNSW}; an open problem, \cite[Conjecture 3]{HNSW}, is whether the stability and approximation results of 
that article hold for the kernels presented here.

\begin{note}\label{euler_angle_collocation}
Suppose we are given centers $\zb\Xi=\{\bxi_j\in SO(3),j=0,\ldots,M-1\}$ 
in terms of Euler angles (as described in Section \ref{ss_basic_facts}), i.e., each $\bxi_j$ is determined by $(\varphi_{1,j},\theta_j,\varphi_{2,j})$.
It is easy to show that we can write the distance of two centers $\bxi_i$ and $\bxi_j$ in terms of Euler angles; in fact we have
\begin{eqnarray*}
\cos\left(\frac{\dist(\bxi_i,\bxi_j)}{2}\right)\!\!\!&=&\!\!\!{\Bigg{(}}\Bigg|\cos\left(\frac{\varphi_{1,i}-\varphi_{1,j}}{2}\right)\cos\left(\frac{\varphi_{2,i}-\varphi_{2,j}}{2}\right)\cos\left(\frac{\theta_i-\theta_j}{2}\right) \\ 
&&\!\!\!\!\!\!-\sin\left(\frac{\varphi_{1,i}-\varphi_{1,j}}{2}\right)\sin\left(\frac{\varphi_{2,i}-\varphi_{2,j}}{2}\right)\cos\left(\frac{\theta_i+\theta_j}{2}\right)\Bigg|\Bigg).
\end{eqnarray*}
Thus, we can directly write down the closed form expression for the entry $\zb A_{i,j}$ of the matrix $\zb A$ as
$$
\zb A_{i,j}=\left(1-\cos^2\left(\frac{\dist(\bxi_i,\bxi_j)}{2}\right)\right)^\frac{2m-3}{2}.
$$
Furthermore, the nonequispaced Fourier matrix $\zb B$ can be computed
directly from (\ref{explicit_form_matrix_elements_SO(3)}).
Such matrices have been studied in great detail and can be handled efficiently, see \cite{PoPrVo09}. 

%\end{example}
%\begin{example}\label{axis_angle_collocation}
On the other hand, if the centers are given via their rotation axis and rotation angle, we can immediately compute the corresponding Euler angles, 
using the direct relation between the Euler angle and the axis-angle parameterization of $SO(3)$, see \cite[Chapter 1.4.4]{Varsh}).
By doing so, we can again easily set up the matrices  $\zb A$ and $\zb B$ as shown above.
\end{note}

\subsection{Integral representation}\label{ss_polyharmonic}
Returning to our investigation of the coefficients $\widetilde{k}_m(\ell)$, we observe that 
$\widetilde{k}_m(\ell)$ can be simplified by arranging factors symmetrically around $\ell+1/2$. Doing this, we have: 
\begin{eqnarray}
\widetilde{k}_m(\ell) %&=&\frac{(2m-2)!}{(-16)^{m-1}} \prod_{j=0}^{2m-2} \frac{ 1}{ (\ell+1/2) - (m-1-j) }\nonumber \\
&=&%\frac{\ell+1/2}{\ell+1/2} 
\frac{2}{\pi}\frac{(2m-2)!}{(-4)^{m-1}}  
\left[\prod_{j=1}^{m-1} \frac{ 1}{ (\ell+1/2) - j } \right]\frac{1}{\ell+1/2} \left[\prod_{j=1}^{m-1} \frac{ 1}{ (\ell+1/2)  + j }\right] \nonumber \\
&=& 
%(2\ell+1)
\frac{2}{\pi}\frac{(2m-2)!}{(-4)^{m-1}} 
\frac{ 1 }{(\ell+1/2)}\prod_{j=1}^{m-1} \frac{ 1}{ (\ell+1/2)^2 - j^2 }.
\end{eqnarray}

The last line comes from rearranging terms, and noting that $[J - k ][J+k] = J^2 -k^2$. 
Since $(\ell+1/2)^2= \ell(\ell+1) +1/4$, we can rewrite the expression in terms of $\ell(\ell+1)$  and obtain (after reindexing)
\begin{eqnarray}
\widetilde{k}_m(\ell) 
%&=& 
%(2\ell+1)\frac{ \const }{J^2}\prod_{j=1}^{m-1} \frac{ 1}{ \ell(\ell +1) - [(m-j)^2-1/4] }\\
&=&\frac{1}{\pi}\frac{(2m-2)!}{(-4)^{m-1}}(2\ell+1)  \prod_{j=0}^{m-1} \frac{ 1}{ \ell(\ell +1) - [j^2-1/4] }.
\end{eqnarray}

The coefficients $\widetilde{k}_m(\ell)$ involve the {\it symbol} for $\Delta$, $\sigma(\Delta) = \ell(\ell+1)$. In fact, these are the reciprocals of a polynomial in the symbol, $p\bigl(\sigma(\Delta)\bigr)^{-1}$, which is nonvanishing, because the numbers $[j^2-1/4]$ never coincide with eigenvalues of $\Delta$.
So the operator induced by $k_m$ inverts the elliptic operator $p(\Delta)$. This is the point of the following lemma:
%
%
%Surface Splines invert differential operators lemma
\begin{lemma} \label{ss_is_Green}  The kernel 
$(\bx,\balpha)\mapsto k_m(\bx,\balpha)$ satisfies the formula
$$f = \int_{SO(3)} k_m(\cdot,\zb  \alpha) \,\L_{m} f(\balpha)\,\dif \mu(\balpha) $$
using the operator of order $2m$,
$\L_{m}:=\frac{\pi(-4)^{m-1}}{(2m-2)!}\prod_{j=0}^{m-1} (\Delta - r_j)$ with
$r_j:=j^2-1/4$, and which holds for all $f\in W_p^{2m}\bigl(SO(3)\bigr)$ (or $C^{2m}(SO(3))$ when $p=\infty$).
\end{lemma}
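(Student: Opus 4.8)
The plan is to verify the reproduction formula by expanding both sides in the Wigner D-function basis and matching coefficients. First I would record the Fourier expansion of $k_m$ obtained by combining the addition formula \eqref{Addn} with the Chebyshev coefficient computation of Lemma \ref{ss_coeff}, as further simplified immediately before the statement: writing $k_m(\bx,\balpha) = \sum_{\ell=0}^\infty \widetilde{k}_m(\ell)\sum_{\iota,\nu=-\ell}^\ell D^\ell_{\iota,\nu}(\bx)\overline{D^\ell_{\iota,\nu}(\balpha)}$ with $\widetilde{k}_m(\ell) = \frac{2}{\pi}\frac{(2m-2)!}{(-4)^{m-1}} (\ell + 1/2)^{-1}\prod_{j=1}^{m-1}[(\ell+1/2)^2 - j^2]^{-1} = p(\ell(\ell+1))^{-1}$, where $p(x) := \frac{\pi(-4)^{m-1}}{(2m-2)!}\prod_{j=0}^{m-1}(x - r_j)$ and $r_j = j^2 - 1/4$. (Note $\widetilde{k}_m(\ell)\ne 0$ for every $\ell$ precisely because no $r_j$ equals an eigenvalue $\ell(\ell+1)$, as already observed in the excerpt.)

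Next I would justify applying $\L_m = p(\Delta)$ termwise to a function $f \in W_p^{2m}(SO(3))$. Using \eqref{LaBe}, $\L_m D^\ell_{\iota,\nu} = p(\ell(\ell+1)) D^\ell_{\iota,\nu} = \widetilde{k}_m(\ell)^{-1} D^\ell_{\iota,\nu}$, so in the $L_2$ sense $\L_m f$ has Fourier coefficients $\widehat{(\L_m f)}{}^\ell_{\iota,\nu} = \widetilde{k}_m(\ell)^{-1}\, \widehat{f}{}^\ell_{\iota,\nu}$. Then the right-hand integral, evaluated against the reproducing structure, gives
$$
\int_{SO(3)} k_m(\cdot,\balpha)\,\L_m f(\balpha)\,\dif\mu(\balpha) = \sum_{\ell=0}^\infty \widetilde{k}_m(\ell)\sum_{\iota,\nu} D^\ell_{\iota,\nu}(\cdot)\, \overline{\langle D^\ell_{\iota,\nu}, \overline{\L_m f}\rangle} ,
$$
and after unwinding the normalization conventions for $\widehat{f}{}^\ell_{\iota,\nu}$ this collapses to $\sum_{\ell,\iota,\nu}\widetilde{k}_m(\ell)\widetilde{k}_m(\ell)^{-1}\sqrt{2\ell+1}\,\widehat{f}{}^\ell_{\iota,\nu} D^\ell_{\iota,\nu} = f$, the Peter--Weyl expansion of $f$. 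For $p = \infty$ and $f \in C^{2m}(SO(3))$ the same identity holds since $C^{2m} \subset W_2^{2m}$.

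The main obstacle is not the formal coefficient-matching but the analytic rigor: one must ensure the series manipulations and the interchange of $\L_m$ with the integral/summation are legitimate in the $L_p$ (or $C^0$) topology, not merely in $L_2$. The cleanest route is to prove it first for $f \in C^\infty(SO(3))$, where all Fourier series converge absolutely and uniformly (the coefficients decay faster than any polynomial while $\widetilde{k}_m(\ell) = O(\ell^{-2m})$ ensures $\sum_\ell \widetilde{k}_m(\ell)(2\ell+1)^2 < \infty$, so the kernel series converges uniformly too), making every step above elementary. Then I would extend to general $f \in W_p^{2m}(SO(3))$ by density — using exactly the density of $C^\infty$ in $W_p^{2m}$ established in Section \ref{ss_smoothness_spaces} — together with the mapping property $\|\L_m f\|_p \lesssim \|f\|_{W_p^{2m}}$ (which follows from the $\|\Delta^j f\|_p \lesssim \|f\|_{W_p^{2m}}$ bound in the excerpt, since $\L_m$ is a degree-$m$ polynomial in $\Delta$) and the boundedness of the linear map $g \mapsto \int_{SO(3)} k_m(\cdot,\balpha) g(\balpha)\dif\mu(\balpha)$ from $L_p$ to $L_p$ (immediate from $k_m \in L_\infty$ and Young/Hölder on the compact group). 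Both sides of \eqref{rep} are then continuous functionals of $f \in W_p^{2m}$ agreeing on a dense subset, hence equal.
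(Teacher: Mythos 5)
Your proposal follows essentially the same route as the paper's proof: expand $k_m$ via the addition formula, apply $\L_m$ termwise using \eqref{LaBe}, match coefficients by orthogonality of the Wigner D-functions, and extend from $C^\infty$ to $W_p^{2m}$ by density together with the boundedness of $\L_m$. The one bookkeeping point to correct is that $\widetilde{k}_m(\ell) = (2\ell+1)\,p\bigl(\ell(\ell+1)\bigr)^{-1}$ rather than $p\bigl(\ell(\ell+1)\bigr)^{-1}$; that extra factor $2\ell+1$ is exactly what cancels the $1/(2\ell+1)$ coming from $\|D^\ell_{\iota,\nu}\|_{L_2}^2 = (2\ell+1)^{-1}$ in the final coefficient match, as the paper's last display makes explicit.
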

\begin{proof}
For a sufficiently smooth $f$ (say $C^{\infty}$ initially), we have the absolutely convergent series $f = \sum_\ell \sum_{\iota,\nu} c_{\iota,\nu}^{\ell} D_{\iota,\nu}^{\ell}$. 
%and $\L_m f = \const^{-1} \sum \sum c_{ j,k}^{\ell} \prod_{j=1}^m \left[ \ell(\ell+1) - r_j\right] D_{j,k}^{\ell}$. 
Using the addition formula \eqref{Addn}, property \eqref{LaBe} and the orthogonality of the Wigner D-functions we get
\begin{eqnarray*}
&\mbox{}&\int_{SO(3)} k_m(\bx,\zb  \alpha) \L_{m} f (\balpha)\,\dif \mu(\balpha)  \\
&=&\int_{SO(3)} \sum_{\ell}\widetilde{k}_m(\ell) {\mathfrak c}_{\ell}(\balpha^{-1}\bx) \times\\
&&  \quad\quad \frac{\pi(-4)^{m-1}}{(2m-2)!}\prod_{j=0}^{m-1} \big( \Delta - r_{j}\big) \sum_{\ell'}\sum_{\iota',\nu'} c_{ \iota',\nu'}^{\ell'}D_{\iota',\nu'}^{\ell'}(\balpha) \dif \mu(\balpha)\\
&=&\int_{SO(3)} \sum_{\ell}\sum_{\iota,\nu}\widetilde{k}_m(\ell)D_{\iota,\nu}^{\ell}(\bx)\overline{D_{\iota,\nu}^{\ell}(\balpha)} \times\\
&&  \quad\quad\sum_{\ell'}\sum_{\iota',\nu'} c_{ \iota',\nu'}^{\ell'} \frac{\pi(-4)^{m-1}}{(2m-2)!}\prod_{j=0}^{m-1} \big( \ell'(\ell'+1) - r_{j}\big) D_{\iota',\nu'}^{\ell'}(\balpha) \dif \mu(\balpha)\\
%&=&\int_{SO(3)} \sum \widetilde{k_m}(\ell') D_{j',k'}^{\ell'}(x)\overline{D_{j',k'}^{\ell'}(\alpha)}\sum c_{ j,k}^{\ell} \prod_{\nu=1}^m \left[ \ell(\ell+1) - r_{\nu}\right] D_{j,k}^{\ell}(\alpha) \dif \alpha\\
%&=&
%\sum \widetilde{k_m}(\ell) c_{ j,k}^{\ell} (2\ell+1)^{-1} \prod_{\nu=1}^m\left[ \ell(\ell+1) - r_{\nu}\right] D_{j,k}^{\ell}(x) = f(x).
&=&\sum_\ell\sum_{\iota,\nu}\widetilde{k}_m(\ell) \frac{\pi(-4)^{m-1}}{(2m-2)!}\prod_{j=0}^{m-1} \left[ \ell(\ell+1) - r_{j}\right] \frac{1}{2\ell+1} c_{\iota,\nu}^{\ell} D_{\iota,\nu}^{\ell}(\bx)=f(\bx).
\end{eqnarray*} 
The reproducing formula follows for all $f$ by a limiting argument, using the density of $C^{\infty}$ in $W_p^{2m}$ for $1\le p<\infty$ (or $C^{2m}$ when $p=\infty$) and the boundedness of $\L_m$. 
\end{proof}

\section{Localizing the Kernel}
For a set of centers $\zb\Xi\subset SO(3) $ we now wish to investigate a `coefficient kernel' 
$\a:\zb\Xi\times SO(3)\to \reals$
that will allow us to easily replace $k_m(\bx,\balpha)$ with  $\bar{k}(\bx,\balpha):=\sum_{\bxi \in \zb \Xi} \a(\bxi,\balpha) k_m(\bx,\bxi)$
in the representation (\ref{rep}). To
do so, we must estimate the cost of replacing the kernel, $e_{k_m}$, given by the `error kernel':
$$e_{k_m}(\bx,\balpha):=|k_m(\bx,\balpha) - \sum_{\bxi \in\zb  \Xi} \a(\bxi,\balpha) k_m(\bx,\bxi)|.$$ 
This is a bounded, rapidly decaying function, 
inducing an operator on $L_p$ with small norm. %Namely, we wish $\|e_{k_m}\|_{p\to p} = \mathcal{O}(h^{2m})$.

The outline of this section is as follows. In \ref{ss_CKC} we develop conditions on 
the coefficient kernel $\a(\bxi,\balpha)$ necessary for proving our results, and it
is shown that such conditions are satisfied for sufficiently dense centers. 
A practical procedure for producing the coefficients is given in \ref{ss_practical}.
In \ref{ss_exchange}, we estimate the decay of the error kernel $e_{k_m}(\bx,\balpha)$.

\subsection{Coefficient kernel conditions}\label{ss_CKC}
The two key quantities we need to resolve are the polynomial {\em precision} 
(the degree of Wigner
D-functions reproduced by the coefficients $\a(\cdot,\balpha)$) and the
rate of decay of the error kernel. 
As in the Euclidean and spherical setting, these are related: the greater the polynomial precision, the more local the replacement error. 

\begin{definition}[Coefficient Kernel Conditions]\label{CKC}
For a set of centers $\zb\Xi \subset SO(3)$ there is a number $\rho = \rho(\zb \Xi)$, reflecting the density of $\zb \Xi$ in $SO(3)$, and a measurable kernel $\a:\zb \Xi\times SO(3) \to \reals$ satisfying the
following three conditions:
\begin{description}
\item[\bf CKC 1 (Support)] $\a(\bxi,\balpha) = 0$ for ${\rm dist}(\balpha,\bxi)>\rho$.
\item[\bf CKC 2 (Precision)] For any polynomial of degree at most $L$, i.e., for any $p \in \zb \Pi_{L},$ the following holds:
\begin{equation*}
\sum_{\bxi\in \zb \Xi} \a(\bxi,\balpha) p(\bxi) = p(\balpha) \quad \text{for every $\balpha\in SO(3)$.}
\end{equation*}
\item[\bf CKC 3 (Stability)] There is a constant $K\ge 1$ such that for every $\balpha\in SO(3)$, we have 
\begin{equation*}
\sum_{\bxi \in \zb \Xi} |\a(\bxi,\balpha)|\le K.
\end{equation*}
\end{description}
\vspace{.1in}
\end{definition}
As in the Euclidean case, such a local polynomial reproduction property holds for sufficiently dense centers. 
In order to show this we need the following fact about polynomials on $SO(3)$.
\begin{proposition}\label{Wigner_to_trig}
Let $p\in\zb\Pi_L$. Then $p$ restricted to a geodesic segment on $SO(3)$ gives a trigonometric polynomial of degree at most $L$.
\end{proposition}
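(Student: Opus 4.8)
\textbf{Proof proposal for Proposition \ref{Wigner_to_trig}.}

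The plan is to reduce everything to the case of a single geodesic through the identity and then analyze how the Wigner-D functions behave along such a geodesic. First I would recall that a geodesic segment in $SO(3)$ (with the bi-invariant metric coming from $\omega$) is a one-parameter family $t\mapsto \bx_0 \exp(tX) \bx_1$ for fixed $\bx_0,\bx_1\in SO(3)$ and a fixed skew-symmetric $X$; by translation invariance of the metric and by the fact that each $\mathcal H_\ell$ is invariant under left and right translations (the spaces $\zb\Pi_L=\bigoplus_{\ell\le L}\mathcal H_\ell$ are translation-invariant since translation permutes the matrix entries $D^\ell_{\iota,\nu}$ among themselves within fixed degree $\ell$), it suffices to show that each $D^\ell_{\iota,\nu}$ restricted to a one-parameter subgroup $t\mapsto \exp(tX)$ is a trigonometric polynomial of degree at most $\ell$. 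Indeed, once that is known, $p\circ(\text{geodesic})$ is a linear combination of functions $D^\ell_{\iota,\nu}\bigl(\bx_0\exp(tX)\bx_1\bigr)$ with $\ell\le L$, each of which equals a linear combination (via the representation property $D^\ell(\bx_0\exp(tX)\bx_1)=D^\ell(\bx_0)D^\ell(\exp(tX))D^\ell(\bx_1)$) of entries of $D^\ell(\exp(tX))$, hence a trig polynomial of degree $\le L$.

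Next I would reduce to a \emph{specific} one-parameter subgroup using conjugacy: every skew-symmetric $X$ is conjugate in $SO(3)$ to a multiple of the generator of rotations about the $z$-axis, i.e. $\exp(tX) = \zb y\, \zb s_z[ct]\, \zb y^{-1}$ for some $\zb y\in SO(3)$ and some constant $c$. By the same translation-invariance argument as above, it is therefore enough to treat the curve $t\mapsto \zb s_z[t]$. But from the explicit formula \eqref{explicit_form_matrix_elements_SO(3)}, with the Euler-angle decomposition $\zb s_z[t]=\zb x(t,0,0)$ (so $\varphi_1=t$, $\theta=0$, $\varphi_2=0$), one reads off $D^\ell_{k,m}(\zb s_z[t]) = e^{-ikt}P^\ell_{k,m}(1)$, which is $c_{k,m}\,e^{-ikt}$ with $|k|\le\ell$ — manifestly a trigonometric polynomial of degree at most $\ell$. (One can alternatively use the curve $t\mapsto \zb s_x[t]$; then $\theta=t$ enters through $P^\ell_{k,m}(\cos t)$, and one checks from the Rodrigues-type formula for $P^\ell_{k,m}$ that $P^\ell_{k,m}(\cos t)$ is a trig polynomial in $t$ of degree $\le\ell$.) Reassembling: $p$ restricted to the geodesic is a finite sum over $\ell\le L$ and over $|k|\le\ell$ of terms $e^{-ikct}$ times constants, hence a trig polynomial of degree $\le \max_\ell \ell \cdot c$; choosing the geodesic to be unit-speed (the natural normalization, $c=1$) gives degree $\le L$.

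The main obstacle is the bookkeeping in the first reduction: one must verify carefully that $\zb\Pi_L$ is invariant under both left and right translations and that a geodesic in the chosen metric really does have the form $\bx_0\exp(tX)\bx_1$ with $\exp(tX)$ a one-parameter subgroup — i.e., that the metric $\dist(\bx,\by)=\omega(\by^{-1}\bx)$ is (a constant multiple of) the Riemannian distance of the bi-invariant metric, whose geodesics through the identity are exactly the one-parameter subgroups. This is standard Lie-theoretic fact (it was already asserted in Section \ref{ss_basic_facts} that the metric is compatible with the Riemannian structure), so the remaining work is genuinely just assembling the explicit formula \eqref{explicit_form_matrix_elements_SO(3)} with the representation identity $D^\ell(\bx\by)=D^\ell(\bx)D^\ell(\by)$ and the conjugation normalization of skew-symmetric matrices; none of these steps involves a nontrivial estimate.
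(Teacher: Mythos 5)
Your proposal is correct and follows essentially the same route as the paper: write the geodesic as a (translated) one-parameter subgroup, conjugate its generator to the $z$-axis rotation $\zb s_z(\omega)$, use the homomorphism property $D^\ell(\bx\zb y)=D^\ell(\bx)D^\ell(\zb y)$ to isolate $D^\ell(\zb s_z(\omega))$, and read off from \eqref{explicit_form_matrix_elements_SO(3)} that its entries are $e^{-i\iota\omega}$ with $|\iota|\le\ell$. The paper merely packages the bookkeeping via traces, $p=\sum_\ell\mathrm{Tr}(\hat A_\ell^T D^\ell(\cdot))$, rather than entry by entry, and your remark about unit-speed parametrization is the correct reading of the implicit normalization in the statement.
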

\begin{proof}
Let $p=\sum\limits_{\ell=0}^L\sum\limits_{\iota,\nu=-\ell}^\ell a_{\iota,\nu}^\ell D_{\iota,\nu}^\ell\in \zb\Pi_L$ and an arbitrary geodesic segment $G=\{\bx_0\zb s_{\eta}(\omega):\omega\in[0,t]\}$ of length $t\in[0,2\pi)$ on $SO(3)$ be given. 
Here $\zb s_{\eta}(\omega)$ denotes a rotation about the rotation axis $\eta\in\mathbb{S}^2$ with rotation angle $\omega\in[0,2\pi)$.
Then we can find $\zb y\in SO(3)$ such that $\zb s_{\eta}(\omega)=\zb y^{-1}\zb s_z(\omega)\zb y$ for all $\omega\in [0,t]$.
The rotation $\zb s_z(\omega)$ can, for example, be represented by the Euler angles $(\omega,0,0)$.
Thus, we get, by using the representation \eqref{explicit_form_matrix_elements_SO(3)}, for $\bx(\omega)=\bx_0\zb s_{\eta}(\omega)\in G$ 
\begin{eqnarray*}
p(\bx(\omega))&=&p(\bx_0\zb y^{-1}\zb s_z(\omega)\zb y)=\sum\limits_{\ell=0}^L\sum\limits_{\iota,\nu=-\ell}^\ell a_{\iota,\nu}^\ell D_{\iota,\nu}^\ell(\bx_0\zb y^{-1}\zb s_z(\omega)\zb y)\\
&=&\sum_{\ell=0}^{L}{\rm Tr}(\hat{A}^T_\ell D^\ell(\bx_0\zb y^{-1}\zb s_z(\omega)\zb y)))\\
&=&\sum_{\ell=0}^{L}{\rm Tr}(\underbrace{D^\ell(\zb y)\hat{A}^T_\ell D^\ell(\bx_0\zb y^{-1})}_{\hat{B}_\ell^T}D^\ell(\zb s_z(\omega))\\
&=&\sum\limits_{\ell=0}^L\sum\limits_{\iota,\nu=-\ell}^\ell {b}_{\iota,\nu}^\ell D_{\iota,\nu}^\ell(\zb  s_z(\omega))=\sum_{\ell=0}^{L}\sum_{\iota=-\ell}^\ell b_{\iota,\iota}^\ell e^{-i\iota\omega}.
\end{eqnarray*}
We note that the operator $\hat{B}_\ell$ and its entries ${b}_{\iota,\nu}^\ell$ are independent of $\omega$. The final equality is a consequence of (\ref{explicit_form_matrix_elements_SO(3)}).
\end{proof}
The Coefficient Kernel Conditions are easily satisfied. The following lemma shows that for sufficiently dense centers, one can find a suitable kernel.
%{\bf In the following lemma we still have to check what explicit conditions we have to pose on the mesh norm and the radius !!!}
\begin{lemma}\label{M_Z}
Given $L$, an integer reflecting the desired precision, and given centers $\zb \Xi$ having 
fill distance $h\le h_0$ (where $h_0 := \frac{\pi}{ C L^2}$, a constant depending only on $L$), 
there exists 
%a constant $R_1>0$ and 
a coefficient kernel $\a:\zb\Xi\times SO(3)\to \reals$ satisfying the CKCs with radius
$\rho = C L^2 h$ (with $C>0$ a global constant) and $K=2$.
\end{lemma}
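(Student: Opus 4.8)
The plan is to reduce the problem on $SO(3)$ to a one-dimensional statement about local polynomial reproduction by samples of a trigonometric polynomial, and then invoke a standard Euclidean-type argument on each geodesic through $\balpha$. Fix $\balpha\in SO(3)$; by translation invariance of the metric we may assume $\balpha$ is the identity. The idea is to build the coefficients $\a(\cdot,\balpha)$ by a norming-set / Marcinkiewicz--Zygmund argument: for sufficiently dense $\zb\Xi$, the point evaluations $\{p\mapsto p(\bxi): \dist(\bxi,\balpha)\le \rho\}$ form a norming set for $\zb\Pi_L$ restricted to the ball $B(\balpha,\rho)$, and then a Hahn--Banach / duality argument produces a bounded functional represented by finitely many of these evaluations that agrees with $p\mapsto p(\balpha)$ on $\zb\Pi_L$. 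This yields CKC 1 (by restricting to $\bxi$ with $\dist(\bxi,\balpha)\le\rho$), CKC 2 (polynomial reproduction), and CKC 3 with $K=2$ (the norming constant can be forced close to $1$ by taking $\rho$ large enough relative to $h$, i.e. $\rho = CL^2 h$).

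The key quantitative input is a Nikolskii/Bernstein-type inequality: there is a constant $c$ so that for every $p\in\zb\Pi_L$ and every $\bx\in SO(3)$,
\begin{equation*}
|p(\bx) - p(\by)| \le c L^2 \dist(\bx,\by)\, \|p\|_{L_\infty(SO(3))} \qquad \text{for } \dist(\bx,\by)\le \tfrac{\pi}{L^2}.
\end{equation*}
This follows from Proposition \ref{Wigner_to_trig}: restricting $p$ to the geodesic joining $\bx$ and $\by$ gives a univariate trigonometric polynomial of degree at most $L$, to which the classical Bernstein inequality $\|g'\|_\infty \le L\|g\|_\infty$ applies; iterating or using the second-order Bernstein inequality produces the $L^2$ (rather than $L$) gain that matches the $h_0 = \pi/(CL^2)$ threshold. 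I would then argue: given any $\bx\in B(\balpha,\rho)$ with $\rho = CL^2h$, pick a nearest center $\bxi\in\zb\Xi$ (so $\dist(\bx,\bxi)\le h$); since $h \le h_0 = \pi/(CL^2)$, the Bernstein estimate controls $|p(\bx)-p(\bxi)|$ by a small multiple of $\|p\|_{L_\infty(B(\balpha,\rho))}$, which shows $\max_{\bxi\in\zb\Xi\cap B(\balpha,\rho)}|p(\bxi)| \ge \tfrac12 \|p\|_{L_\infty(B(\balpha,\rho))}$, i.e. the sample points are a norming set with constant $2$.

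From the norming-set property the construction of $\a$ is routine functional analysis: the restriction map $T:\zb\Pi_L \to \ell_\infty(\zb\Xi\cap B(\balpha,\rho))$, $Tp = (p(\bxi))_\bxi$, is injective with $\|T^{-1}\|\le 2$ on its range; the evaluation functional $p\mapsto p(\balpha)$ on $\mathrm{range}(T)$ has norm $\le 2$ (again since $\balpha\in B(\balpha,\rho)$), and by Hahn--Banach it extends to a norm-$\le 2$ functional on $\ell_\infty$ of the finite index set, i.e. an $\ell_1$ vector $(\a(\bxi,\balpha))_\bxi$ with $\sum_\bxi|\a(\bxi,\balpha)|\le 2$ and $\sum_\bxi \a(\bxi,\balpha)p(\bxi) = p(\balpha)$. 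Extending $\a(\bxi,\balpha)=0$ for $\dist(\bxi,\balpha)>\rho$ gives CKC 1; measurability of $\balpha\mapsto\a(\bxi,\balpha)$ follows from a standard selection/finiteness argument since only finitely many centers are involved locally and one may choose, e.g., a least-norm (minimal $\ell_2$) solution, which depends measurably on $\balpha$. The main obstacle is getting the powers of $L$ exactly right: one must make sure the Bernstein constant genuinely yields an $L^2$ (not $L^1$ or $L^3$) dependence in $h_0$ and $\rho$, which requires either the second-order Bernstein inequality on geodesics or a two-step argument (first norming on a scale $h \lesssim 1/L$, then bootstrapping), and also requires verifying that the geodesic-restriction degree bound in Proposition \ref{Wigner_to_trig} is tight enough that no extra factor of $L$ sneaks in.
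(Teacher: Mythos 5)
Your overall strategy is the same as the paper's: a norming-set argument, duality/Hahn--Banach to produce the coefficients, and Proposition~\ref{Wigner_to_trig} to reduce to trigonometric polynomials on geodesics. But the key analytic ingredient is not right. The inequality you state, $|p(\bx)-p(\by)|\le cL^2\dist(\bx,\by)\,\|p\|_{L_\infty(SO(3))}$, carries the \emph{global} sup norm on the right, because the classical Bernstein inequality $\|g'\|_\infty\le L\|g\|_\infty$ for the restriction of $p$ to a geodesic requires the sup over the entire closed geodesic, which leaves the ball $B(\balpha,\rho)$. With a global norm on the right you can only conclude a global norming property for $\zb\Xi$, which yields CKC~2 and CKC~3 but \emph{not} CKC~1: the Hahn--Banach functional would be supported on all of $\zb\Xi$, and the locality of $\a(\cdot,\balpha)$ is the entire point of the lemma (it drives every decay estimate in Section~4). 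To get $\|P\|_{L_\infty(B(\balpha,\rho))}\le 2\max_{\bxi\in\zb\Xi\cap B(\balpha,\rho)}|P(\bxi)|$ you need a Bernstein-type inequality \emph{on a subinterval}, with the norm on the right taken over that subinterval; this is Videnski\u\i's inequality $|T'(t)|\le 2L^2\cot(\omega/4)\|T\|_{L_\infty(-\omega/2,\omega/2)}$, and it is precisely its $L^2/\omega$ blow-up (not an iterated or second-order Bernstein inequality) that produces the $L^2$ in $\rho=CL^2h$ and $h_0=\pi/(CL^2)$. Your closing worry about "getting the powers of $L$ right" misdiagnoses the problem: the issue is local versus global norms, not the exponent of $L$.

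A second, related omission: even with Videnski\u\i's inequality in hand, you cannot simply take the center nearest to the point $\zb p$ where $|P|$ attains its maximum on the ball. You need a geodesic segment of length $\omega\sim\rho$ that starts at $\zb p$, passes through (or very near) a center $\bxi$ with $\dist(\zb p,\bxi)\lesssim h$, and lies \emph{entirely inside} $B(\balpha,\rho)$, so that the subinterval norm $\|T\|_{L_\infty(-\omega/2,\omega/2)}$ is controlled by $\|P\|_{L_\infty(B(\balpha,\rho))}$; if $\zb p$ is near the boundary and the nearest center lies in an outward direction, this fails. The paper handles this with a uniform interior cone condition for balls in $SO(3)$: one finds a cone at $\zb p$ of aperture $\theta$ and length $\rho/2$ contained in the ball, places a small ball of radius $h$ inside it at distance $C'h$ from $\zb p$, and takes a center in that small ball; the requirement $\dist(\zb p,\bxi)/\omega\le 1/(16L^2)$ then forces $\rho=32CL^2h$. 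You need to supply both of these steps for the proof to close.
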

Before providing the proof, we remark that the restriction $\rho\le \pi$ forces 
$h\le \frac{\pi}{ C L^2},$ which is the only restriction placed on $h$.
\begin{proof}
%
%
%
%{\bf We need to fill this in.}
Constructing a coefficient kernel follows a `norming set argument', which relies on being able to capture the norm of a polynomial over a ball by its restriction to the centers inside the ball:
\begin{equation}\label{restriction}
\|P\|_{L_{\infty}(B(\zb p_0,\rho))}\le 2\|P_{|_{\zb \Xi\cap B(\zb p_0,\rho)}}\|_{\ell_{\infty}(\zb \Xi\cap B(\zb p_0,\rho))}.
\end{equation}
Utilizing this to obtain a local polynomial reproduction is a fairly standard procedure,
and the basics of our argument can be found in  \cite[Lemma 4.2]{Hang} or \cite[Chapter 3]{Wend}. So all we need to show is \eqref{restriction}.

The inequality (\ref{restriction}) relies on locating a center 
$\zb \xi\in \zb \Xi$ near the point where the uniform norm is attained,  
$\zb p\in B (\zb p_0,\rho)$, 
so that a geodesic segment of length $\omega$, 
starting at $\zb p$ and passing through $\zb \xi$ 
lies entirely within $B(\zb p_0,\rho)$. 
Specifically, we require
\begin{equation}\label{cone}
\frac{\dist(\zb p,\zb \xi)}{\omega }\le \frac{1}{16 L^2}.
\end{equation}
The geodesic segment is determined by the curve $\gamma:[-\omega/2,\omega/2]\mapsto SO(3)$,
parameterized by arclength.
The main estimate we employ is Videnski\u\i's inequality \cite{Vid} which reads as follows. Let $T$ be a trigonometric polynomial of degree at most $L$, then
$$|T'(t)|\le 2 L^2 \cot (\omega/4) \|T\|_{L_{\infty}(-\omega/2,\omega/2)}\qquad
\text{for $\omega/2<\pi$, $|t|\le \omega/2.$}$$
The trigonometric polynomial $\tau = P\circ \gamma$ is the restriction of $P$, the polynomial in question, to the geodesic segment. 
We then observe that 
$\tau(-\omega/2) - \tau(-\omega/2+\dist(\zb p,\zb \xi)) \le \int_{-\omega/2}^{ -\omega/2 +\dist(\zb p,\zb \xi)} |\tau'(t)|\dif t$. 
Applying Videnski\u\i's inequality %(the constant $C$ somehow involves a factor obtained by estimating $\cot(\omega)$ by $1/\omega$, as well as a factor involving the polynomial 
%degree -- I don't have the paper in front of me, so maybe you can check this -- let's leave it as a constant for now and fill it in later):
$$P(\zb p) - P(\zb \xi) 
\le  
\frac{ 8 L^2 }{\omega}  \dist(\zb p,\zb \xi)\|\tau\|_{L_{\infty}(-\omega/2,\omega/2)}
%\le 
% 2\pi L^2 \frac{C_1 }{C_2}P(p)
  \le \frac{1}{2} P(\zb p).
$$
This gives us the desired inequality: $P(\zb p) \le 2P(\zb \xi)$.

The fact that (\ref{cone}) holds follows by employing a cone condition. On $SO(3)$, a cone $c(\zb q,\theta,\nu,r)$ is the union of geodesic segments of length $r$ emanating from $\zb q$ and having the initial tangent vector within $\theta$ of the tangent vector $\nu$. We call $\theta$ the aperture and $\nu$ the direction of the cone. It is an elementary fact that every ball in $SO(3)$ satisfies a uniform cone condition. I.e., there is $\theta>0$ so that for any $\zb p_0\in SO(3)$ and $\rho>0$ and for every point in the ball $B(\zb p_0,\rho)$,
the cone $c(\zb p, \theta,\nu, \rho/2)\subset  B(\zb p_0,\rho)$, where $\nu$ is the tangent vector at $\zb p$ of the
geodesic segment connecting $\zb p$ and $\zb p_0$.

%By taking an angle $\theta'<\theta$ smaller than the aperture of the cone, 
%we ensure that a ball of radius $h$ whose center is within %$C\frac{1+\sin\theta'}{\theta'}h$ 
%$C'h$ from $\zb p$ and which lies within the cone $c(\zb p, \theta,\nu, \rho/2)$ (here $C' = C'(\theta')$ is a global constant that depends only on $\theta'$ and the geometry of $SO(3)$; it behaves like
%$\frac{1}{\sin \theta'}$). 
%Thus there is a center $\zb \xi$ that is 
%$C\frac{1+\sin\theta'}{\theta'}h$ 
%$Ch = (C'+1)h$
%from $\zb p$, for which the geodesic segment connecting $\zb p$ and $\zb \xi$  of length $\rho/2$ is entirely within $B(\zb p_0,\rho)$. Therefore,
%$\rho = 32 L^2 Ch$ suffices. 

An appeal to geometry shows that there is a global constant $C'$ depending only on $\theta$ and the geometry of $SO(3)$
so that a smaller ball $b\subset SO(3)$ having small radius $h$ can be placed entirely within the cone 
$c(\zb p, \theta,\nu, \rho/2)$ with its center a distance of $C'h$ from $\zb p$. 
Thus there is a center $\zb \xi$ that is a distance
$Ch \leq (C'+1)h$ from $\zb p$, for which the geodesic segment connecting $\zb p$ and $\zb \xi$  of length $\rho/2$ is entirely within $B(\zb p_0,\rho)$. Therefore,
$\rho = 32 L^2 C h$ suffices. 
%It suffices to show that 
%$\{\delta_{\xi} : \xi \in \Xi \cap B(\alpha, \rho)\}$ is a ``norming set'' for $\Pi_{L}$  restricted to $B(\alpha, \rho)$ and the
%norm of the inverse of the sampling operator is bounded by 2. 
%In other words, we wish to show that 
%$$\|p\|_{L_\infty(B(\alpha,\rho))}\le 2\|p_{|_{\Xi\cap B(\alpha,\rho)}}\|_{\ell_{\infty}(\Xi\cap B(\alpha,\rho))}\quad \text{for $p \in \Pi_L$.}$$
%{\bf Rest of proof here. I'm hoping this will follow from a similar argument to the one you used in \cite{Schmid1}}
%In order to show this let $p\in\Pi_L$ and find $x_0\in B(\alpha,\rho)$ such that  $|p(x_0)|=\|p\|_{L_\infty(B(\alpha,\rho))}$
\end{proof}

\subsection{Constructing coefficients in practice}\label{ss_practical}
Although the previous lemma guarantees the existence of a coefficient 
kernel sufficient for our purposes, it does not provide a
viable method for generating such kernels in practice. 
%Because the Hahn-Banach theorem is used to obtain certain linear functionals, one may wish to see a more implementable procedure. 
To do so, we make the extra assumption that the centers are quasiuniform:
 that $h$ and $q$, as defined
in Section 2, are related by $h/q\le \mathfrak{m}$ for a fixed `mesh ratio' $\mathfrak{m}<\infty$.
Armed with this assumption, we can estimate the number of centers in a given ball 
of radius $\rho=CL^2 h$ (as was used in the previous lemma):
$$\#\zb{\Xi}_{\balpha} = \# \bigl(B(\balpha,\rho)\cap \zb{\Xi}\bigr)
\le \frac{b_2(CL^2 h)^3}{b_1 q^3} \le \left(\frac{\pi}{2}\right)^2 C^3 L^6 \mathfrak{m}^3.$$
We may determine a modified coefficient kernel, which we call $\widetilde{\a}$ 
that satisfies the coefficient kernel conditions by taking
the $\ell_2$ minimizing solution at each $\balpha$ of the system
$\zb B \mathsf{a} = {\mathsf{D}_{\balpha}}$,
where 
$({\mathsf{D}_{\balpha}})_{(j,k,\ell)}  = D_{jk}^{\ell}(\balpha)$ is the vector of 
Wigner D-functions evaluated at $\balpha$,
the matrix $\zb B$ is the `non-equispaced Fourier matrix' introduced in (\ref{E_collocation}). I.e., the
matrix 
whose rows are the  D-functions evaluated
at the various $\bxi$ in $\zb{\Xi}_{\balpha}$. I.e., $(\zb B)_{(j,k,\ell),\bxi} = D_{jk}^{\ell}(\bxi)$. 

It follows that  
the $\ell_2$ minimizing solution is given by 
$\zb B^*(\zb B \zb B^*)^{-1} {\mathsf{D}_{\balpha}}$, 
so we obtain a coefficient kernel 
$\widetilde{\a}(\bxi,\balpha)$ 
satisfying conditions CKC 1 and CKC 2 of Definition \ref{CKC}
via
$\bigl(\widetilde{\a}(\bxi,\balpha)\bigr)_{\bxi\in \zb{\Xi}_{\balpha}} =\zb B^* (\zb B \zb B^*)^{-1} {\mathsf{D}_{\balpha}}$ and zero extension. 
Moreover, it is clear that on the subset 
$$\{\balpha' \in SO(3)\mid \zb \Xi_{\balpha'} = \zb\Xi_{\balpha} \}$$
(i.e., the set of $\balpha'$ for which the sets  $\zb \Xi_{\balpha'}$ coincide) each 
$\widetilde{\a}(\bxi,\cdot)$
is a Wigner D-function.  Hence, $\widetilde{\a}$ is measurable.

To show that $\widetilde{\a}$ satisfies CKC 3 we can compare it 
to the kernel $\a$ guaranteed by Lemma \ref{M_Z} to observe that 
\begin{eqnarray*}
\|\widetilde{\a}(\cdot,\balpha)\|_{\ell_1(\zb\Xi)}
&\le &
\left(\#\zb{\Xi}_{\balpha}\right)^{1/2} \|\widetilde{\a}(\cdot,\balpha)\|_{\ell_2(\zb\Xi)}
\le 
  \sqrt{(\pi/2)^2C^{3} L^6 \mathfrak{m}^{3}} \, \|{\a}(\cdot,\balpha)\|_{\ell_2(\zb\Xi)}\\
&\le&
\frac{\pi}{2}\sqrt{ C^{3} L^6 \mathfrak{m}^{3}  }\,\|{\a}(\cdot,\balpha)\|_{\ell_1(\zb\Xi)}
 \le 
 \pi\sqrt{ C^{3} L^6 \mathfrak{m}^{3} }.
 \end{eqnarray*}
 So CKC3 is satisfied with $K=  \pi\sqrt{ C^{3} L^6 \mathfrak{m}^{3} }$.

\subsection{Estimating the cost of replacing the kernels}\label{ss_exchange}
A consequence of the Coefficient Kernel Conditions is that for any $\bx\in SO(3)$ 
and a smooth class function $\kappa$, there is a convenient mechanism to estimate the
error $e_{\kappa}(\bx,\zb\alpha)$ in terms of an interval around $\omega(\balpha^{-1}\bx)$.
For fixed $\bx\in SO(3)$ we choose to express the kernel $\kappa$ in terms of 
$\tau_{\bx}(\bzeta)= \cos(\frac{\omega(\bzeta^{-1}\bx)}{2})$, 
by way of the formula 
$\kappa(\bx,\bzeta) = \vartheta(\tau_{\bx}^2{(\bzeta)})$ 
assuming $\vartheta$  is smooth on the interval 
$\mathcal{I}_{\bx}:=[\min Q_{\bx},\max Q_{\bx}],$ where 
the set $Q_{\bx}$ depends on the rotation angles used, 
$Q_{\bx}:=\{\tau_{\bx}^2{(\balpha)} \}\cup \{\tau_{\bx}^2(\bxi):\bxi\in (\zb \Xi\cap B(\balpha,\rho)\}$.
The cost of replacing the kernel can be estimated in terms of the length of the interval
$\mathcal{I}_{\bx}$ and the size of derivatives of $\vartheta$ 
{\em purely on $\mathcal{I}_{\bx}$}.  
This is the gist of the following lemma:
\begin{lemma}\label{kernel_cost}
Given a kernel  $\kappa(\bx,\bzeta) = \vartheta (\tau_{\bx}^2{(\bzeta)})$, 
with $\vartheta \in C^{L+1}(\mathcal{I}_{\bx})$, and a coefficient kernel satisfying the CKCs with precision $L$, the replacement error 
$e_{\kappa}(\bx,\balpha)\index{\footnote{}}
= 
|\kappa(\omega(\balpha^{-1}\bx)) - \sum_{\bxi \in \zb \Xi} \a(\bxi,\balpha) \kappa(\omega( \bxi^{-1}\bx))|.$ We have the estimate:
\begin{equation}\label{kern_est}
e_{\kappa}(\bx,\balpha)\le 
\frac{\|\a(\cdot,\balpha)\|_{\ell_1(\zb \Xi)}}{(L+1)!}\,\,
  |\mathcal{I}_{\bx}|^{L+1} \, \max_{t\in \mathcal{I}_{\bx}} |\vartheta^{(L+1)}(t)|. 
\end{equation}
\end{lemma}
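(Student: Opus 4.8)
The plan is to exploit the polynomial reproduction property CKC 2 to insert a Taylor polynomial for free, and then control the remainder. Fix $\bx \in SO(3)$ and recall that by Proposition \ref{Wigner_to_trig} a polynomial $p \in \zb\Pi_L$ restricted to the geodesic segments through $\bx$ is a trigonometric polynomial of degree at most $L$ in the rotation angle $\omega$; equivalently, it is a polynomial of degree at most $L$ in $\tau_{\bx}^2(\bzeta) = \cos^2(\frac{\omega(\bzeta^{-1}\bx)}{2})$, since $\cos(k\omega)$ and $\sin(k\omega)\cdot(\text{odd powers cancel})$ expand in Chebyshev polynomials of $\cos(\omega/2)$, and evenness in $\omega/2$ leaves only even powers. (This is exactly the identification in \eqref{Cheb}.) So: let $T_L$ be the degree-$L$ Taylor polynomial of $\vartheta$ about the point $t_0 := \tau_{\bx}^2(\balpha) \in \mathcal{I}_{\bx}$, and set $P_{\bx}(\bzeta) := T_L(\tau_{\bx}^2(\bzeta))$. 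Then $P_{\bx} \in \zb\Pi_L$.

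The main computation is then just bookkeeping. Write
\begin{align*}
e_{\kappa}(\bx,\balpha)
&= \Bigl| \kappa(\bx,\balpha) - \sum_{\bxi \in \zb\Xi} \a(\bxi,\balpha)\kappa(\bx,\bxi) \Bigr|\\
&= \Bigl| \bigl(\kappa(\bx,\balpha) - P_{\bx}(\balpha)\bigr) - \sum_{\bxi \in \zb\Xi} \a(\bxi,\balpha)\bigl(\kappa(\bx,\bxi) - P_{\bx}(\bxi)\bigr) \Bigr|,
\end{align*}
where the insertion of $P_{\bx}$ is legitimate because $\sum_{\bxi} \a(\bxi,\balpha) P_{\bx}(\bxi) = P_{\bx}(\balpha)$ by CKC 2. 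Now $\kappa(\bx,\bzeta) - P_{\bx}(\bzeta) = \vartheta(\tau_{\bx}^2(\bzeta)) - T_L(\tau_{\bx}^2(\bzeta))$ is precisely the Taylor remainder of $\vartheta$ evaluated at the argument $\tau_{\bx}^2(\bzeta)$. The first term vanishes identically (the remainder at $t_0$ is zero). For each surviving $\bxi$ in the sum, CKC 1 forces $\dist(\balpha,\bxi) \le \rho$, hence $\tau_{\bx}^2(\bxi) \in Q_{\bx} \subset \mathcal{I}_{\bx}$, and the Lagrange form of the remainder gives
$$|\vartheta(\tau_{\bx}^2(\bxi)) - T_L(\tau_{\bx}^2(\bxi))| \le \frac{|\tau_{\bx}^2(\bxi) - t_0|^{L+1}}{(L+1)!} \max_{t \in \mathcal{I}_{\bx}} |\vartheta^{(L+1)}(t)| \le \frac{|\mathcal{I}_{\bx}|^{L+1}}{(L+1)!} \max_{t \in \mathcal{I}_{\bx}} |\vartheta^{(L+1)}(t)|,$$
using that both $\tau_{\bx}^2(\bxi)$ and $t_0$ lie in the interval $\mathcal{I}_{\bx}$ of length $|\mathcal{I}_{\bx}|$. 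Summing over $\bxi$ and pulling the uniform bound out of the sum produces the factor $\|\a(\cdot,\balpha)\|_{\ell_1(\zb\Xi)}$, which is \eqref{kern_est}.

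The one point requiring genuine care—and the step I expect to be the main obstacle—is the claim that $P_{\bx}(\bzeta) = T_L(\tau_{\bx}^2(\bzeta))$ genuinely lies in $\zb\Pi_L$, i.e. that composing a degree-$L$ univariate polynomial with $\tau_{\bx}^2$ lands in the space spanned by Wigner D-functions of degree $\le L$, not merely degree $\le 2L$. The resolution is that $\tau_{\bx}^2(\bzeta) = \cos^2(\frac{\omega(\bzeta^{-1}\bx)}{2}) = \frac{1}{2}(1 + \cos\omega(\bzeta^{-1}\bx))$ and, more to the point, the monomial $t^k$ in $\tau_{\bx}^2$ rewrites via \eqref{Cheb}: $\cos^{2k}(\omega/2)$ is a linear combination of $\mathcal{U}_{2j}(\cos(\omega/2)) = \mathfrak{c}_j$ for $j \le k$, and $\mathfrak{c}_j(\bzeta^{-1}\bx) = \tr D^j(\bzeta^{-1}\bx) = \sum_{\iota,\nu} D^j_{\iota,\nu}(\bx)\overline{D^j_{\iota,\nu}(\bzeta)}$ by \eqref{Addn}, which as a function of $\bzeta$ is a combination of Wigner D-functions of degree $j \le k$. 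Hence $\tau_{\bx}^{2k}(\cdot) \in \zb\Pi_k$, so $T_L \circ \tau_{\bx}^2 \in \zb\Pi_L$, and CKC 2 applies with precision exactly $L$. Everything else is the routine Taylor estimate above.
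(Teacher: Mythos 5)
Your proposal is correct and follows essentially the same route as the paper's proof: insert the degree-$L$ Taylor polynomial of $\vartheta$ about $\tau_{\bx}^2(\balpha)$, verify that its composition with $\tau_{\bx}^2$ lies in $\zb\Pi_L$ (the paper does this by expanding in the basis $\mathcal{U}_{2\ell}(\sqrt{t})$, $\ell\le L$, and invoking the addition formula \eqref{Addn} — the same degree-halving observation you make), apply CKC~2 to annihilate it, and bound the remaining terms by the Lagrange remainder. The "genuine care" point you flag is exactly the step the paper's proof devotes its middle paragraph to, and your resolution matches theirs.
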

\begin{proof}
Let both $\bx$ and $\balpha$ be fixed, and choose the Taylor polynomial of degree $L$, $q_{L,t_{\bx}(\balpha)}$, of $\vartheta$ expanded about $t_{\bx}(\balpha) = \tau_{\bx}^2(\balpha)$. 
Now $q_{L,t_{\bx}(\balpha)}$ may be rewritten as a linear combination of even degree Chebyshev polynomials,
$$q_{L,t_{\bx}(\balpha)}(t) =  \sum_{\ell=0}^L c_{\ell}
\mathcal{U}_{2\ell}(\sqrt{t}).$$
 This follows because $\mathcal{U}_{2\ell}$ is of exact degree $2\ell$ and is a linear combination of even degree monomials only, so $\mathcal{U}_{2\ell}(\sqrt{t})$ is a polynomial of exact degree $\ell$, and, thus, the set $\left(\mathcal{U}_{2\ell}(\sqrt{t})\right)_{\ell = 0}^L$ is a basis for set of polynomials up to degree $L$.
Note, furthermore, that $q_{L,t_{\bx}(\balpha)}(t_{\bx}(\balpha))- \sum \a(\bxi,\balpha)q_{L,t_{\bx}(\balpha)}(t_{\bx}(\bxi)) = 0$ by the addition formula, since 
$$
q_{L,t_{\bx}(\balpha)}(\tau_{\bx}^2(\bzeta)) 
= \sum_{\ell=0}^L c_{\ell}\mathcal{U}_{2\ell}(\tau_{\bx}(\bzeta))
=\sum_{\ell\le L}c_{\ell} \sum_{k,m} D_{k,m}^{\ell}(\bx)\overline{D_{k,m}^{\ell}(\bzeta)}
$$
 and each $D_{k,m}^{\ell}(\bzeta)$ is annihilated by 
 $\mu = \delta_{\balpha}-\sum \a(\bxi,\balpha) \delta_{\bxi}$. 

Consequently, 
$
e_{\kappa}(\bx,\balpha) 
\le 
\sum_{\bxi} |\a(\bxi,\balpha)| \:|\vartheta(t_{\bx}(\bxi)) - q_{L,t_{\bx}(\balpha)}(t_{\bx}(\bxi))|,
$ 
and the lemma follows from the remainder in Taylor's theorem: 
$$
|\vartheta(t_{\bx}(\bxi)) - q_{L,t_{\bx}(\balpha)}(t_{\bx}(\bxi))|
\le 
\frac{1}{(L+1)!} |t_{\bx}(\bxi)- t_{\bx}(\balpha)|^{L+1} \max_{t\in \mathcal{I}_{\bx}}|\vartheta^{(L+1)}(t)|.
$$
\end{proof}
We  now  measure $e_{k_m}$, the cost of replacing $ k_m(\bx,\balpha)$ by a linear
combination of its copies, $\sum_{\xi} \a(\bxi,\balpha)k_m(\bx,\bxi)$. 
%
%:Lemma: ss_loc
%
\begin{lemma}\label{ss_loc}
Assume $\a$ is a coefficient kernel satisfying the CKCs with radius $\rho$, precision $L\ge 2m$ and stability constant 
$K=\sup_{\balpha}\|\a(\cdot,\balpha)\|_{\ell_1(\zb \Xi)} $. 
Then the replacement error of the kernel $k_m$ satisfies 
\begin{equation*}
e_{k_m}(\bx,\balpha) \le {\const}(L,m)\, K
\rho^{2m-3}\left(1+ \frac{\dist (\bx,\balpha)}{\rho}\right)^{2m-4-L}.
\end{equation*}
where $\const(L,m)$ is a constant that only depends on $L$ and $m$.
%= \max\left(\left(\frac{3}{2}\right)^{L+3}, (4\pi)^{2L+5-2m}/{L!}%\frac{2^{3L+9-4m} }{L!}\right).$
\end{lemma}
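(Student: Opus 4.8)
The plan is to apply Lemma \ref{kernel_cost} with $\kappa = k_m$ and then carefully track how the relevant interval $\mathcal{I}_{\bx}$ and the derivative bounds $\max|\vartheta^{(L+1)}|$ behave in terms of $\dist(\bx,\balpha)$. First I would fix $\bx$ and $\balpha$ and recall that the surface spline is $k_m(\bx,\bzeta) = \psi_s\bigl(\cos(\frac{\omega(\bzeta^{-1}\bx)}{2})\bigr) = (1-\tau_{\bx}^2(\bzeta))^{s}$ with $s = \frac{2m-3}{2}$; thus in the notation of Lemma \ref{kernel_cost} we have $\vartheta(t) = (1-t)^{s}$. Since $s$ is a pure half-integer and $m\ge 2$, $\vartheta$ is $C^{\infty}$ on any interval bounded away from $t=1$, and near $t=1$ it has an algebraic singularity of order $s = \frac{2m-3}{2}$ in its higher derivatives. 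The two cases to separate are therefore $\balpha$ close to $\bx$ (so $\mathcal{I}_{\bx}$ may touch the singular point $t=1$) and $\balpha$ far from $\bx$ (so $\mathcal{I}_{\bx}$ stays away from $1$ and $\vartheta$ is genuinely smooth there).

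The key geometric observation is to bound $|\mathcal{I}_{\bx}|$ and $\dist_{\bx} := \mathrm{dist}(\mathcal{I}_{\bx}, \{1\})$. Writing $t_{\bx}(\bzeta) = \cos^2(\frac{\omega(\bzeta^{-1}\bx)}{2})$, a short computation gives $1 - t_{\bx}(\bzeta) = \sin^2(\frac{\dist(\bx,\bzeta)}{2}) \asymp \dist(\bx,\bzeta)^2$. Since CKC~1 forces every contributing $\bxi$ to lie within $\rho$ of $\balpha$, the triangle inequality gives $\dist(\bx,\bxi) \le \dist(\bx,\balpha) + \rho$ and $\dist(\bx,\bxi)\ge \dist(\bx,\balpha) - \rho$; hence the set $Q_{\bx}$ lies in a band whose distance from $1$ and whose length are both controlled by $\dist(\bx,\balpha)$ and $\rho$. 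Concretely one gets $|\mathcal{I}_{\bx}| \lesssim \rho(\rho + \dist(\bx,\balpha)) \lesssim \rho(\rho + \dist(\bx,\balpha))$ and, in the ``far'' regime $\dist(\bx,\balpha) \ge 2\rho$, that $\mathcal{I}_{\bx}$ lies in $[c\,\dist(\bx,\balpha)^2, \,1 - c\,\dist(\bx,\balpha)^2]$ for a constant $c$, so that $\max_{t\in\mathcal{I}_{\bx}}|\vartheta^{(L+1)}(t)| \lesssim_{L,m} \dist(\bx,\balpha)^{2s - 2(L+1)} = \dist(\bx,\balpha)^{2m-3-2(L+1)}$. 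Feeding these bounds into \eqref{kern_est} and using $\|\a(\cdot,\balpha)\|_{\ell_1} \le K$ yields, in the far regime, $e_{k_m}(\bx,\balpha) \lesssim_{L,m} K\, \rho^{L+1}(\rho+\dist(\bx,\balpha))^{L+1}\dist(\bx,\balpha)^{2m-3-2(L+1)} \asymp K \rho^{2m-3}(\dist(\bx,\balpha)/\rho)^{2m-4-L}$ after absorbing the comparable factors; the exponent $2m-4-L$ is exactly what is claimed once one checks the bookkeeping (one power of $L+1$ from $|\mathcal{I}_{\bx}|^{L+1}$ matched against one factor of $\dist^2$ in the derivative bound, etc.). In the ``near'' regime $\dist(\bx,\balpha) < 2\rho$ the factor $(1 + \dist(\bx,\balpha)/\rho)^{2m-4-L}$ is bounded above and below by constants, so it suffices to show $e_{k_m}(\bx,\balpha) \lesssim_{L,m} K\rho^{2m-3}$; this follows either directly from the crude bound $|k_m| \le 1$ (which gives $e_{k_m} \le (1 + K) \le 2K$, too weak by a power of $\rho$) — so instead one must use Lemma \ref{kernel_cost} with the singular-but-integrable behaviour of $\vartheta$, splitting $\mathcal{I}_{\bx}$ at the singularity and estimating the two pieces: on the part where $1 - t \ge |\mathcal{I}_{\bx}|$ the smooth bound applies, and on the part where $1 - t < |\mathcal{I}_{\bx}|$ one uses that $\vartheta$ itself is bounded by $|\mathcal{I}_{\bx}|^{s}$ there together with the fact that the Taylor error is also dominated by the same power; both contributions are $\lesssim |\mathcal{I}_{\bx}|^{s} \lesssim \rho^{2s} = \rho^{2m-3}$.

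The main obstacle I anticipate is the near-diagonal case: precisely because $\vartheta(t) = (1-t)^{(2m-3)/2}$ is only finitely smooth at $t = 1$ (it is not $C^{L+1}$ there when $L \ge m-2$), one cannot simply quote Lemma \ref{kernel_cost} with $\mathcal{I}_{\bx}$ containing $1$. The fix is the standard one for polyharmonic-type kernels: replace the single Taylor expansion by a decomposition of the interval, or equivalently apply Lemma \ref{kernel_cost} not to $\vartheta$ itself but to a truncated/regularized version on the smooth part while controlling the rest by the magnitude of $\vartheta$ directly; the algebraic singularity of order $s > 0$ is integrable-type and contributes only the benign factor $|\mathcal{I}_{\bx}|^{s} \asymp \rho^{2m-3}$, which is exactly the prefactor in the claimed bound. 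Everything else — the triangle-inequality geometry, the estimate $1 - \tau_{\bx}^2(\bzeta) \asymp \dist(\bx,\bzeta)^2$, the bound on $|\mathcal{I}_{\bx}|$, and collecting the powers of $\rho$ and $\dist(\bx,\balpha)$ — is routine. One final check: the hypothesis $L \ge 2m$ guarantees that the exponent $2m-4-L \le -4 < 0$, so the bound genuinely decays, and the constant $\const(L,m)$ collects the binomial-type constants from the Chebyshev rewriting in Lemma \ref{kernel_cost}, the factor $1/(L+1)!$, the implied constants in $1 - \tau_{\bx}^2 \asymp \dist^2$, and the $\max|\vartheta^{(L+1)}|$ estimate, all of which depend on $L$ and $m$ only.
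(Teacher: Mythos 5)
Your proposal follows the same two-case strategy as the paper, and your far-field bookkeeping ($|\mathcal{I}_{\bx}| \lesssim \rho(\rho+\dist(\bx,\balpha))$, $\max_{\mathcal{I}_{\bx}}|\vartheta^{(L+1)}| \lesssim \dist(\bx,\balpha)^{2s-2(L+1)}$, then Lemma \ref{kernel_cost}) is exactly the paper's Case II, with the powers of $\rho$ and $\dist(\bx,\balpha)$ collecting correctly to $K\rho^{2m-3}(\dist(\bx,\balpha)/\rho)^{2m-4-L}$. The one place you over-complicate matters is the near-diagonal case: the splitting-plus-Taylor device you describe cannot be run literally, because the Taylor polynomial of $\vartheta=(1-\cdot)^{s}$ about $t_{\bx}(\balpha)$ to order $L+1\ge 2m+1$ does not exist when $t_{\bx}(\balpha)$ sits at the singular point $t=1$ (e.g.\ $\bx=\balpha$), and its coefficients degenerate when $1-t_{\bx}(\balpha)$ is much smaller than $|\mathcal{I}_{\bx}|$; your phrase ``the Taylor error is also dominated by the same power'' is exactly the step that is not justified. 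The paper's Case I avoids any expansion: for $\dist(\bx,\balpha)\le 2\rho$ every $\bzeta\in\{\balpha\}\cup\supp{\a(\cdot,\balpha)}$ satisfies $\dist(\bx,\bzeta)\le 3\rho$, so termwise $|k_m(\bx,\bzeta)|=\bigl(\sin\tfrac{\dist(\bx,\bzeta)}{2}\bigr)^{2m-3}\le(\tfrac32\rho)^{2m-3}$, and CKC~3 gives $e_{k_m}(\bx,\balpha)\le(1+K)(\tfrac32\rho)^{2m-3}$ directly --- this is precisely the ``order-$2s$ zero at the diagonal'' you invoke, but used without routing through Lemma \ref{kernel_cost}. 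With your near-case argument replaced by that one-line bound, the proof is complete and coincides with the paper's.
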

\begin{proof}
We break this into two parts, the first concerns $\zb x$ near to $\balpha$ (say $\dist(\bx,\balpha) \le 2\rho$) 
where we can make use of the high order zero of the kernel, while the second treats 
$\bx$ sufficiently far from $\balpha$ so that we can utilize the smoothness of the kernel and Lemma \ref{kernel_cost}.

{\bf Case I:} 
Assume $\dist(\bx,\balpha) \le 2\rho$. 
This implies that $\dist(\bx,\bxi) \le 3\rho$ for every $\bxi\in \supp{\a(\cdot,\balpha)}$. 
Writing the kernel as $k_m(\bx,\bzeta) = \left(\sin \frac{\dist(\bx,\bzeta)}{2}\right)^{2m-3}$
%. Using the fact that $\dist(x,\xi) \le 3\rho$, 
and using the stability of the coefficient kernel, we see that
\begin{eqnarray*}
|k_m(\bx,\balpha)  - \sum \a(\bxi,\balpha) k_m(\bx,\bxi)| &\le& (1+K) \left(\sin\frac{3}{2}\rho\right)^{2m-3} 
\\
&\le& (1+K)   \left(\frac{3}{2}\right)^{2m-3}\rho^{2m-3}.  
\end{eqnarray*}
Because $1+K \le 2K \le (3/2)^2 K$, it follows that 
$e_{k_m}(\bx,\balpha)$ is less than  $(3/2)^{2m-1}K \rho^{2m-3}$.
The condition that $\dist(\bx,\balpha) \le 2\rho$ implies that 
$3\ge 1+\dist(\bx,\balpha)/\rho$, which handles the first case.

{\bf Case II:} We use the fact, from the definition of the surface spline (\ref{surface_spline}),
 that 
 $k(\bx,\bzeta) = (1 - \cos^2 \frac{\dist(\bzeta, \bx)}{2})^{s} $ 
 %=\psi_s( \cos \frac{\dist(\bzeta, \bx)}{2})$, 
 with 
 $2s = 2m-3$.
 Since $0\le\dist(\bzeta, \bx)\le \pi$, we can rewrite the kernel as
 $k(\bx,\bzeta)  =  \vartheta_s(t_{\bx}(\bzeta))$, with 
 $t_{\bx}(\bzeta):= \cos^2\bigl(\frac{\dist(\bzeta, \bx)}{2}\bigr) = \bigl(\tau_{\bx}{(\bzeta)}\bigr)^2$
 and $\vartheta_s := (1-\cdot)^s$.
 We note that 
 $\vartheta_s$  % $\psi_s(t)$
 is smooth for $t$ less than $1$. 
Lemma \ref{kernel_cost} tells us that the error is controlled by $|\mathcal{I}_{\bx}|^{L+1}$. The length of $\mathcal{I}_{\bx}$ can be written as
\begin{eqnarray*}
 |t_{\bx}(\bxi) - t_{\bx}(\bzeta)| 
 &=& 
 \left|
   \cos^2\left(\frac{\dist(\bxi,\bx)}{2}\right) 
   - 
   \cos^2\left(\frac{\dist(\bzeta,\bx)}{2}\right)
\right|\\
&=&  
\left|
 \sin \left( \frac{\dist(\bxi, \bx)}{2}\right) 
 - 
 \sin\left(\frac{\dist(\bzeta, \bx)}{2}\right)
 \right| \times \\
&\mbox{}&
\left|\sin\left(\frac{\dist(\bxi, \bx)}{2}\right) +\sin\left(\frac{\dist(\bzeta, \bx)}{2}\right)\right|.
\end{eqnarray*}
Thus, for $\bx$ in a sufficiently wide annulus about $\balpha$, say 
$2^n \rho \le \dist(\bx,\balpha) \le 2^{n+1} \rho$, 
we have 
$2^{n-1} \rho \le \dist(\bx,\bxi) \le 2^{n+2} \rho$ 
for all $\bxi \in \supp{\a(\cdot, \balpha)}$, 
and we can estimate $|\mathcal{I}_{\bx}|$ by
$$
|\mathcal{I}_{\bx}| 
\le 
\frac{|\dist(\bzeta,\bxi)|}{2}\times \frac{|\dist(\bx,\bxi)+ \dist(\bx,\bzeta)|}{2}
\le \rho\times (2^{n+2}\rho).
$$
Applying Lemma \ref{kernel_cost} gives
$$
e_{k_m}(\bx,\balpha)
\le 
\frac{K}{(L+1)!} \|
 \vartheta_s^{(L+1)}  %\psi_s^{(L+1)}
\|_{L_\infty(\mathcal{I}_{\bx})} \rho^{2(L+1)} 2^{(n+2)(L+1)}. 
$$
Differentiating
% the univariate function  $\vartheta_s$  % $\psi_s$ 
%$L+1$ times  %$\psi_s$ 
gives
$
\|
\vartheta_s^{(L+1)}  %\psi_s^{(L+1)}
\|_{L_\infty(\mathcal{I}_x)}\le
\left|\prod_{j=0}^L (s-j)%s(s-1)\cdot\ldots\cdot(s-L)
\right|\max_{t\in \mathcal{I}_{\bx}} (1-t)^{s-L-1},
$
and we note that the maximum is attained at $t=t_{\bx}(\bzeta)= \cos^2\frac{\dist(\bzeta, \bx)}{2}$ 
for the rotation $\bzeta\in \{\balpha\}\cup\supp{\a(\cdot,\balpha)}$ nearest to $\bx$. 
Rewriting $1-t_{\bx}(\bzeta)$ as  $\sin^2\frac{\dist(\bzeta, \bx)}{2},$ 
and using the inequality 
$\sin\theta\ge\frac{2}{\pi}\theta$ 
gives
$1-t_{\bx}(\bzeta)\ge (\frac{2}{\pi}\frac{\dist(\bzeta, \bx)}{2})^2 \ge (\frac{2^{n-1}\rho}{\pi})^2$. 
Thus,
\begin{eqnarray*}
\|
 \vartheta_s^{(L+1)}  %\psi_s^{(L+1)}
\|_{L_\infty(\mathcal{I}_x)}
&\le& \left|s(s-1)\cdots(s-L)\right|\left(\frac{2^{2(n-1)} \rho^2}{\pi^2}\right)^{s-L-1}\\
&=& \left|s(s-1)\cdots(s-L)\right| \pi^{2(L+1-s)} \rho^{2(s-L-1)} 2^{2(n-1)(s-L-1)}.
\end{eqnarray*} 
Together with the estimate of the length of $|\mathcal{I}_{\bx}|$, we obtain 
\begin{eqnarray*}
e_{k_m}(\bx,\balpha)&\le&
K\frac{(8 \pi)^{2(L+1-s)}}{(L+1)!}|s(s-1)\cdots(s-L)|\rho^{2s}\,2 ^{(n+2)(2s-1-L)}.
\end{eqnarray*}
Because $2^{n+2}\ge1+ \frac{\dist(\zb x,\zb\alpha)}{\rho}$, the result follows.
\end{proof}

\section{ Approximation Scheme and Main Results}\label{s_main}
We are now ready to introduce the approximation scheme central to our error estimates. Given a coefficient kernel $\a$ satisfying the CKCs with precision $L\geq2m$, we define the operator $T_{\zb \Xi}$ on $W_p^{2m}\bigl(SO(3)\bigr)$ by
\begin{equation}\label{approximant}
T_{\zb \Xi} f(\bx) = 
\sum_{\bxi \in \zb \Xi} A_{\zb \xi} k_m(\bx,\bxi)
\end{equation}
with $A_{\zb \xi} :=   \int_{SO(3)} \L_m f(\balpha) \a(\bxi,\balpha)  \dif \mu(\balpha) $ (recalling that the operator $\L_m$ was defined in Lemma \ref{ss_is_Green}). 

The approximant 
$$T_{\zb \Xi} f := \int_{SO(3)} \sum \a(\bxi,\balpha) k_m(\cdot, \bxi) \L_{m}f(\balpha) \dif \mu(\balpha)$$ 
is the function obtained by replacing the kernel in the integral representation by linear combinations of its nearby shifts.

An apparent drawback of this approach is
the requirement that $f$ should be in the domain of $\L_{m}$: this is
a restrictive smoothness condition on the target function. To remedy this, we must modify the approach slightly, by splitting the (less smooth) target function into two parts:
a component of sufficient smoothness (in $L_p$)  and
a rough but negligible component.  The approximant to a general function will be obtained by applying the operator $T_{\zb\Xi}$  to its smooth part only. Precisely how this split is accomplished will be set aside for now, and revisited later in this section. 
\subsection{Functions of Full Smoothness}
We obtain error estimates by bounding the norm of the operator induced by the error kernel,
i.e.,  %That is, we wish to estimate the norm of the operator 
$$E_{k_m}:L_p\bigl(SO(3)\bigr)\to L_p\bigl(SO(3)\bigr),\quad 
E_{k_m} g(\bx) = \int_{SO(3)} e_{k_m}(\bx,\balpha) g(\balpha)\dif\mu(\zb\alpha).$$ The bound on such an operator controls the error of the global approximant, as we will see in the following theorem.
%$$\left(\int_{SO(3)} \left|f(\bx) - \int_{SO(3)} \sum \a(\bxi,\balpha) k_m(\bx,\bxi) \L_{m}f(\balpha) \dif \mu(\balpha)\right|^p\dif \mu(\bx) \right)^{1/p} \le \|E_{k_m}\|_{p\to p} \|\L_{m}f\|_p.$$

\begin{theorem}\label{main:full}
Let $m\ge 2$, and let $\a:\zb\Xi\times SO(3) \to \reals$  be a coefficient kernel satisfying CKCs with radius $\rho$, precision $L\geq 2m$, and stability constant $K$.
For $f\in W_p^{2m}(SO(3))$, $1\le p<\infty$, the approximant converges, in $L_p$, to $f$ 
 with error bounded by
$$\|f - T_{\zb \Xi}f\|_p \le C \rho^{2m} \|f\|_{W_p^{2m}(SO(3))}.$$
Likewise, for $f\in C^{2m}(SO(3))$,  the approximant converges in $L_{\infty}$
with error
$$\|f - T_{\zb \Xi}f\|_{\infty} \le C \rho^{2m} \|f\|_{C^{2m}(SO(3))}.$$
\end{theorem}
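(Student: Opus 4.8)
The plan is to combine the integral representation from Lemma~\ref{ss_is_Green} with the decay estimate for the error kernel from Lemma~\ref{ss_loc} and a Schur-test-style bound on the induced integral operator. Starting from $f = \int_{SO(3)} k_m(\cdot,\balpha)\,\L_m f(\balpha)\,\dif\mu(\balpha)$, valid since $f\in W_p^{2m}$ (or $C^{2m}$), and the definition of $T_{\zb\Xi}$ as the same integral with $k_m(\cdot,\balpha)$ replaced by $\sum_{\bxi}\a(\bxi,\balpha)k_m(\cdot,\bxi)$, the difference is $f(\bx) - T_{\zb\Xi}f(\bx) = \int_{SO(3)}\bigl(k_m(\bx,\balpha) - \sum_{\bxi}\a(\bxi,\balpha)k_m(\bx,\bxi)\bigr)\L_m f(\balpha)\,\dif\mu(\balpha)$, whose modulus is at most $\int_{SO(3)} e_{k_m}(\bx,\balpha)\,|\L_m f(\balpha)|\,\dif\mu(\balpha) = E_{k_m}(|\L_m f|)(\bx)$. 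So it suffices to bound $\|E_{k_m}\|_{L_p\to L_p}$ and then invoke the mapping property $\|\L_m f\|_p \lesssim \|f\|_{W_p^{2m}}$ (which follows from the local-coordinate form of $\Delta$ quoted in Section~\ref{ss_smoothness_spaces}, applied $m$ times, together with $\L_m = p(\Delta)$).

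The core estimate is that $\|E_{k_m}\|_{L_p\to L_p} \le C\rho^{2m}$. Because $e_{k_m}$ is a nonnegative kernel, I would use the Schur test: it is enough to show $\sup_{\bx}\int_{SO(3)} e_{k_m}(\bx,\balpha)\,\dif\mu(\balpha) \le C\rho^{2m}$ and, by the symmetry $e_{k_m}(\bx,\balpha) = e_{k_m}(\balpha,\bx)$ in the distance, the same bound with the roles of $\bx$ and $\balpha$ swapped, giving the $L_p$ operator norm for all $1\le p\le\infty$ by interpolation between $L_1$ and $L_\infty$. To estimate the integral, plug in Lemma~\ref{ss_loc}: $e_{k_m}(\bx,\balpha) \le C(L,m)K\rho^{2m-3}\bigl(1+\dist(\bx,\balpha)/\rho\bigr)^{2m-4-L}$. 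Decompose $SO(3)$ into the ball $B(\bx,\rho)$ and dyadic annuli $A_n = \{\balpha : 2^n\rho \le \dist(\bx,\balpha) < 2^{n+1}\rho\}$ for $n\ge 0$; using $\mu(B(\bx,2^{n+1}\rho)) \le b_2 (2^{n+1}\rho)^3$ from Section~\ref{ss_basic_facts}, the contribution of $A_n$ is $\lesssim \rho^{2m-3}\cdot (2^n\rho)^{3}\cdot (2^n)^{2m-4-L} = \rho^{2m}\,2^{n(2m-1-L)}$, and since $L\ge 2m$ the exponent $2m-1-L \le -1 < 0$, so the geometric series over $n\ge 0$ converges to a constant depending only on $L$, $m$ (and through $C(L,m)K$ also on $K$). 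The ball $B(\bx,\rho)$ itself contributes $\lesssim \rho^{2m-3}\cdot\rho^3 = \rho^3\cdot\rho^{2m-3} \cdot 1$, again of order $\rho^{2m}$. This yields $\int e_{k_m}(\bx,\balpha)\dif\mu(\balpha) \le C\rho^{2m}$ with $C = C(L,m,K)$.

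Putting these together: $\|f - T_{\zb\Xi}f\|_p \le \|E_{k_m}\|_{L_p\to L_p}\,\|\L_m f\|_p \le C\rho^{2m}\,\|\L_m f\|_p \le C\rho^{2m}\,\|f\|_{W_p^{2m}(SO(3))}$, with the $C^{2m}$ case being the $p=\infty$ endpoint handled identically (using the $C^{2m}$ version of both Lemma~\ref{ss_is_Green} and the mapping property). The routine points I would not belabor are the verification that $e_{k_m}$ is symmetric and measurable so that the Schur test applies, and that $\L_m$ genuinely maps $W_p^{2m}\to L_p$ boundedly, which is exactly the iterated mapping property stated in Section~\ref{ss_smoothness_spaces}. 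The main obstacle — really the only delicate point — is making sure the dyadic sum converges, i.e.\ that the decay exponent $2m-4-L$ in Lemma~\ref{ss_loc} beats the volume growth exponent $3$ of the annuli; this is precisely why the hypothesis $L\ge 2m$ is imposed (it gives $3 + (2m-4-L) \le -1$), and one should double-check the arithmetic there and confirm that the constant absorbed is independent of $\bx$, $\balpha$, and the center set $\zb\Xi$, depending only on $L$, $m$, and $K$.
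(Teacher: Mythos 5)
Your proposal is correct and follows essentially the same route as the paper: reduce to the operator norm of $E_{k_m}$ via the $L_1$/$L_\infty$ Schur-type bounds plus interpolation, invoke Lemma \ref{ss_loc}, and use $\|\L_m f\|_p\lesssim\|f\|_{W_p^{2m}}$. The only difference is cosmetic — you sum over dyadic annuli using the volume bound $\mu(B)\le b_2 r^3$, whereas the paper converts the integral of the (distance-only) majorant to a one-dimensional Haar integral $\rho^{2m}\int_0^\infty(1+\theta)^{2m-4-L}\theta^2\,\dif\theta$; both hinge on exactly the same convergence condition $L\ge 2m$, and note that it is the majorant, not $e_{k_m}$ itself, that is symmetric in $\bx$ and $\balpha$.
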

\begin{proof}
For $1\le p\le \infty$ we can estimate the approximation error by
\begin{eqnarray*}
\|f - T_{\zb\Xi}f\|_p 
%&\le &
%\left\| 
%\int_{\sphere} \L_m f(\alpha) \; k_m(\cdot,\alpha)\dif \alpha - 
%\int_{\sphere} \L_m f(\alpha) \; \sum_{\xi \ in \Xi} a(\xi,\alpha) k_m(\cdot,\xi)\, \dif \alpha 
%\right\|_p \\
&= &
\left\| 
\int_{SO(3)} \L_m f(\balpha) \Big( k_m(\cdot,\balpha)-\; \sum_{\bxi \in \Xi} a(\bxi,\balpha) k_m(\cdot,\bxi)\Big)\dif \mu(\balpha) 
%\int_{\sphere} \L_m f(\alpha) \; \sum_{\xi \ in \Xi} a(\xi,\alpha) k_m(\cdot,\xi)\, \dif \alpha 
\right\|_p \\
&\le& 
\left\|\int_{SO(3)} |\L_m f(\balpha)| \; e_{k_m}(\cdot,\balpha)\, \dif \mu(\balpha)\right\|_p \le  \|E_{k_m}\|_{p\to p}\|\L_m f\|_p.
\end{eqnarray*}
We need to consider only two values of $p$, namely $p = 1$ and $\infty$, because the others follow by interpolation. 
The $L_1$ operator norm is controlled by 
$M_1:=\sup_{\balpha\in SO(3)}\int e_{k_m}(\bx,\balpha) \dif\mu(\bx)$.
Likewise, the $L_{\infty}$ norm is controlled by 
$M_{\infty} :=\sup_{\bx\in SO(3)}\int e_{k_m}(\bx,\balpha) \dif \mu(\balpha)$. 
Since the estimate from Lemma \ref{ss_loc} controls $e_{k,m}$ by a quantity  involving $\omega(\balpha^{-1}\bx)$ only, the error kernel is generated by a class function, and we can make a change of variable to see that the result is the same for $L_1$ and $L_{\infty}$ (and, hence, all other $1<p<\infty$ as well):
\begin{eqnarray*}
%&\mbox{}&\max{\left(\int e_{k_m}(\bx,\balpha) \dif \mu(\bx),\,\int e_{k_m}(\bx,\balpha) \dif \mu(\balpha)\right)} \\
\max{(M_1,M_{\infty})}
&\le& 
\int_{SO(3)}\const \rho^{2m-3}\left(1+ \frac{\dist (\cdot,\balpha)}{\rho}\right)^{2m-4-L}\dif \mu(\balpha)\\
&\le&\const \rho^{2m-3}\int_0^{\pi} \left(1+ \frac{\omega}{\rho}\right)^{2m-4-L} \sin^2 \frac{\omega}{2}\, \dif \omega\\
%&=&\const \rho^{2m-2}\int_0^{\pi/\rho} \left(1+ \theta \right)^{2m-4-L} \sin^2 \left(\frac{\rho \theta}{2}\right)\, \dif \theta\\
&\le& \const \rho^{2m}\int_0^{\infty} \left(1+ \theta \right)^{2m-4-L}  \theta^2\, \dif \theta\leq C\, \rho^{2m}.
\end{eqnarray*}
 Interpolating over $1\le p\le\infty$ we obtain
$$\|E_{k_m}\|_{p\to p} \le C\, \rho^{2m},$$
and, hence, 
$\|f - T_{\zb \Xi}f\|_p\le   \|E_{k_m}\|_{p\to p} \|\L_m f\|_{p} 
\le 
C\, \rho^{2m}\|\L_m f\|_{p}.$
\end{proof}
\begin{note}\label{coeffs_note}
The size of the coefficients $A_{\zb \xi}$ can be estimated as follows. 
Given a coefficient kernel $\a$ satisfying the CKCs with  stability constant $K$, % = \sup_{\balpha}\|\a(\cdot,\balpha)\|_{\ell_1(\zb \Xi)}$, 
we have
$$\|A\|_{\ell_1(\zb \Xi)} \le K 
 \|\L_m f\|_1.$$
Indeed,
$$\sum_{\bxi\in \zb \Xi}|A_{\zb \xi}| \le \int_{SO(3)} \sum_{\bxi\in\zb  \Xi} |\a(\balpha,\bxi)| |\L_m f (\balpha) | \dif \mu(\balpha) \le K \int_{SO(3)}  |\L_m f (\balpha) | \dif \mu(\balpha).$$
\end{note}
Applying Lemma \ref{M_Z} to the previous theorem gives the following corollary.
%
%Corollary
%
\begin{corollary}
For  centers $\zb \Xi$ having density $h<h_0$ (with $h_0$ determined by
$L$  as in Lemma \ref{M_Z}), there exists a coefficient kernel such that the approximant defined in \eqref{approximant} converges like
$$\|f - T_{\zb \Xi}f\|_p \le C h^{2m} \|f\|_{W_p^{2m}(SO(3))}$$
with coefficients satisfying the estimate
$$\|A\|_{\ell_1(\zb \Xi)} \le 2  \|\L_m f\|_1.$$
\end{corollary}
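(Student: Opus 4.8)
The plan is to combine Lemma~\ref{M_Z}, Theorem~\ref{main:full}, and Note~\ref{coeffs_note} directly, since the corollary is essentially a repackaging of these three results under the hypothesis that $h$ is small. First I would invoke Lemma~\ref{M_Z}: given the precision parameter $L$ (with $L\ge 2m$ as required by Theorem~\ref{main:full}) and centers $\zb\Xi$ with fill distance $h<h_0=\frac{\pi}{CL^2}$, the lemma produces a coefficient kernel $\a:\zb\Xi\times SO(3)\to\reals$ satisfying the CKCs with radius $\rho=CL^2 h$ and stability constant $K=2$.

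Next I would feed this coefficient kernel into Theorem~\ref{main:full}. That theorem gives $\|f-T_{\zb\Xi}f\|_p\le C'\rho^{2m}\|f\|_{W_p^{2m}(SO(3))}$ (and the analogous $C^{2m}$ bound when $p=\infty$), where $C'$ depends only on $L$ and $m$. Since $\rho=CL^2 h$, we have $\rho^{2m}=(CL^2)^{2m}h^{2m}$, so absorbing the factor $(CL^2)^{2m}$ into the constant yields $\|f-T_{\zb\Xi}f\|_p\le C h^{2m}\|f\|_{W_p^{2m}(SO(3))}$ with $C$ depending only on $L$ and $m$ (and $L$ is itself chosen once, as a function of $m$, so ultimately $C$ depends only on $m$). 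Finally, for the coefficient bound, I would apply Note~\ref{coeffs_note}, which states $\|A\|_{\ell_1(\zb\Xi)}\le K\|\L_m f\|_1$; since Lemma~\ref{M_Z} guarantees $K=2$ for these particular coefficient kernels, we obtain $\|A\|_{\ell_1(\zb\Xi)}\le 2\|\L_m f\|_1$ immediately.

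There is no real obstacle here: every ingredient has already been established, and the corollary simply records the specialization of Theorem~\ref{main:full} to the concrete coefficient kernels of Lemma~\ref{M_Z}, for which the stability constant is the explicit value $K=2$. The only mild point to keep straight is that the constant $C$ in the final estimate depends on the chosen precision $L$ (through both the $\rho^{2m}$ scaling and the constant $C'$ in Theorem~\ref{main:full}), and that $h_0$ likewise depends on $L$; once $L$ is fixed as a function of $m$ (any $L\ge 2m$ will do), all constants depend on $m$ alone, which is the form in which the corollary is stated.
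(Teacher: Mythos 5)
Your proposal is correct and matches the paper's (essentially one-line) argument exactly: the corollary is obtained by instantiating Theorem \ref{main:full} with the coefficient kernel from Lemma \ref{M_Z} (radius $\rho = CL^2h$, stability constant $K=2$) and invoking Note \ref{coeffs_note} for the coefficient bound. Your remarks on how the constants absorb the factor $(CL^2)^{2m}$ and on the dependence of $h_0$ and $C$ on $L$ are accurate and consistent with the paper.
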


\subsection{Functions of Lower Smoothness}
To treat more general functions, we make use of a common $K$-functional argument, one practically ubiquitous in approximation theory. This will permit us to define approximants for functions in Besov spaces of lower order (and ultimately get good error bounds for such functions as well).

\begin{definition}For $0<s\le 2m$ and $1\le p< \infty$, we define the Besov space $B_{p,\infty}^s\bigl(SO(3)\bigr)$ as the collection of 
functions in $L_p\bigl(SO(3)\bigr)$  for which the following expression 
$$\|f\|_{B_{p,\infty}^s}:=\sup_{t>0} t^{-s} K(f,t) $$
is finite, where the $K$-functional $K(f,\cdot):(0,\infty)\to (0,\infty)$ is defined as
$$K(f,t) := \inf\left\{\|f - g\|_{L_p} + t^{2m}\|g\|_{W_p^{2m}}: g\in W_p^{2m} \right\}.$$
For $p=\infty$, the definition is the same after substituting
 $L_{p}\bigl(SO(3)\bigr)$ by $C\bigl(SO(3)\bigr)$ and $W_p^{2m}$ by $C^{2m}$.
\end{definition}
There are abundant alternative characterizations of Besov spaces. 
We have chosen the preceding because it  captures precisely the properties
we need from a function of lower smoothness. 
%It is also the one used by Triebel \cite[1.11]{Trieb}.
It is possible to follow the example of Definition \ref{Sobolev}, by transporting the 
Besov norm from $\reals^3$. 
For a general manifold this would not work, it is possible only because $SO(3)$ is compact 
(specifically, it works because the diffeomorphisms and partitions of unity  employed 
have finite cardinality-- see \cite[1.11.3, Remark 2 and 2.4.7]{Trieb} for 
a discussion of this). 
We recommend \cite[1.11, and Chapter 7]{Trieb} and the references therein for background
on Besov spaces on Riemannian manifolds.

An immediate consequence of the definition is that, for an arbitrary function in $B_{p,\infty}^s$, and ${\rho}>0$, there is $g_{\rho}\in W_p^{2m}$ simultaneously satisfying
\begin{eqnarray}
&\mbox{}& \|g_{\rho}\|_{W_p^{2m}} \le 2 {\rho}^{s-2m}\|f\|_{B_{p,\infty}^s},\label{Bern}\\
&\mbox{}& \|f-g_{\rho}\|_{L_p} \le 2 {\rho}^s \|f\|_{B_{p,\infty}^s}.\label{Jack}
\end{eqnarray}
In the setting of Theorem \ref{main:full} (namely, with radius $\rho$), the approximant to $f$ is then $s_{f,\zb \Xi}:=T_{\zb \Xi}g_{\rho}$.

\begin{note} It is important to point out that the function $g_{\rho}$ can be constructed
by the same method employed in Section \ref{ss_smoothness_spaces} to create the $C^{\infty}$ approximant. Namely,
one may employ a $C^{\infty}$ partition of unity $(\psi_j)_{j=1}^N$, a corresponding sequence of smooth diffeomorphisms, and a sequence of cut-offs $(\tilde{\psi}_j)_{j=1}^N$ to generate an approximant of the form
$$g_{\rho} = \sum_{j = 1}^N \tilde{\psi}_j \times (g_{j,\rho} \circ h_j)$$
where $g_{j,\rho}:\reals^3\to \reals$ approximates the function $f_j = (f\times\psi_j)\circ h_j^{-1}\in B_{p,\infty}^s(\reals^3)$ in the sense of (\ref{Bern}) and (\ref{Jack}). 
The Euclidean approximants $g_{j,\rho}$ can be constructed,
for example, by the method \cite[Theorem 5.33]{Adams2} or  \cite[Lemma 5.1]{DeRo}.
\end{note}

\begin{note}
 Again, we can estimate the size of the coefficients. For $f\in B_{p,\infty}^s$ we have
$$\|A\|_{\ell_1(\zb \Xi)}\le 2 K  {\rho}^{s-2m} \| f\|_{B_{p,\infty}^s}.$$
This follows directly from Note \ref{coeffs_note}.
\end{note}
\begin{theorem}\label{main:lower}
Let $s<2m$ and $1\le p\le \infty$. Suppose the coefficient kernel $\a:\zb\Xi\times SO(3) \to \reals$ satisfies CKCs with radius $\rho$,  precision $L\ge 2m$, and stability constant $K$. Then for $f\in B_{p,\infty}^s$ there is $s_{f,\zb \Xi}$ satisfying
$$\|f - s_{f,\zb\Xi}\|_p \le C \rho^{s} \|f\|_{B_{p,\infty}^{s}}.$$
\end{theorem}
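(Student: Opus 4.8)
The plan is to combine the decomposition $f = g_\rho + (f - g_\rho)$ furnished by the $K$-functional with the full-smoothness estimate of Theorem \ref{main:full}. Set $s_{f,\zb\Xi} := T_{\zb\Xi} g_\rho$, where $g_\rho \in W_p^{2m}$ (or $C^{2m}$ when $p=\infty$) satisfies the simultaneous bounds \eqref{Bern} and \eqref{Jack}. Then by the triangle inequality,
$$\|f - s_{f,\zb\Xi}\|_p \le \|f - g_\rho\|_p + \|g_\rho - T_{\zb\Xi} g_\rho\|_p.$$
The first term is controlled directly by \eqref{Jack}, giving $\|f - g_\rho\|_p \le 2\rho^s \|f\|_{B_{p,\infty}^s}$. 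For the second term, since $g_\rho \in W_p^{2m}$, Theorem \ref{main:full} applies and yields $\|g_\rho - T_{\zb\Xi} g_\rho\|_p \le C \rho^{2m} \|g_\rho\|_{W_p^{2m}}$; then invoking \eqref{Bern} gives $\|g_\rho - T_{\zb\Xi}g_\rho\|_p \le C\rho^{2m} \cdot 2\rho^{s-2m}\|f\|_{B_{p,\infty}^s} = 2C\rho^s \|f\|_{B_{p,\infty}^s}$.

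Adding the two contributions, $\|f - s_{f,\zb\Xi}\|_p \le (2 + 2C)\rho^s \|f\|_{B_{p,\infty}^s}$, which is the claimed estimate after absorbing constants. The argument is essentially a one-line $K$-functional interpolation, so there is no serious obstacle in the main chain of inequalities — every ingredient (the existence of $g_\rho$ with the stated properties, the boundedness of $T_{\zb\Xi}$ on functions of full smoothness) has already been established in the excerpt. The only points requiring care are bookkeeping ones: ensuring that the same $\rho$ is used both as the CKC radius in Theorem \ref{main:full} and as the parameter in the $K$-functional split (which is exactly the convention fixed just before the theorem, where $s_{f,\zb\Xi} := T_{\zb\Xi}g_\rho$ "in the setting of Theorem \ref{main:full}"), and handling the $p=\infty$ case with $C(SO(3))$ and $C^{2m}$ in place of $L_p$ and $W_p^{2m}$, which is purely notational.

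One subtlety worth a remark is that $s_{f,\zb\Xi}$ is only well-defined once a specific $g_\rho$ is chosen — the approximant depends on this choice — but since the bound holds for any near-minimizer of the $K$-functional, the statement is unambiguous. If one wants $s < 2m$ strictly (as hypothesized) rather than $s \le 2m$, no change is needed; the case $s = 2m$ would simply reduce to Theorem \ref{main:full} with the Sobolev norm replaced by the Besov norm, which is weaker, so the restriction $s < 2m$ is the natural range where the Besov formulation adds content. I expect the entire proof to occupy only a few lines.
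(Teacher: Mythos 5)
Your proposal is correct and follows exactly the paper's argument: split $f = g_\rho + (f-g_\rho)$ via the $K$-functional, control $\|f-g_\rho\|_p$ by \eqref{Jack}, and apply Theorem \ref{main:full} to $g_\rho$ together with \eqref{Bern}. The paper's proof is a one-liner saying precisely this, so no further comment is needed.
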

\begin{proof}
This follows by applying Theorem \ref{main:full} to $g_{\rho}$ and observing that (\ref{Bern}) ensures
$\|\L_m g_{\rho}\|_p\lesssim \rho^{s-2m} \|f\|_{B_{p,\infty}^{s}}$.
\end{proof}
Applying Lemma \ref{M_Z} again, we obtain:
\begin{corollary}
For  centers $\zb\Xi$ having density $h<h_0$ (with $h_0$ determined by
$L$  as in Lemma \ref{M_Z}), and for $f\in B_{p,\infty}^s$, there is an
approximant $s_{f,\zb\Xi}$ satisfying
$$\|f - s_{f,\zb\Xi}\|_p \le C h^{s} \|f\|_{B_{p,\infty}^{s}}$$
with coefficients satisfying the estimate
$$\|A\|_{\ell_1(\zb \Xi)} \le 2 h^{s-2m} \|f\|_{B_{p,\infty}^{s}}.$$
%there exists a constant $R_1>0$ and a coefficient kernel $\a:\Xi\times\sphere\to \reals$ satisfying the conditions of a coefficient kernel with $\rho = 48L^2 h$ and $K=2$.
\end{corollary}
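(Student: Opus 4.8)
The plan is to obtain the corollary by specializing Theorem~\ref{main:lower} (together with the accompanying coefficient estimate) to the coefficient kernel manufactured in Lemma~\ref{M_Z}; the content of the corollary is entirely contained in those results, so the argument is bookkeeping rather than a new idea. First I would invoke Lemma~\ref{M_Z}: since the centers $\zb\Xi$ have fill distance $h<h_0=\pi/(CL^2)$, there is a coefficient kernel $\a:\zb\Xi\times SO(3)\to\reals$ satisfying the Coefficient Kernel Conditions with precision $L$, stability constant $K=2$, and radius $\rho=CL^2h$ (with $C$ the global constant from that lemma). The hypothesis $h<h_0$ is precisely what forces $\rho\le\pi$, so $\rho$ is an admissible radius, and the standing assumption $L\ge 2m$ meets the hypothesis of Theorem~\ref{main:lower}.

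Next I would feed this kernel into Theorem~\ref{main:lower}. For each $f\in B_{p,\infty}^s$ this yields the approximant $s_{f,\zb\Xi}=T_{\zb\Xi}g_\rho$ (with $g_\rho$ the near-minimizer of the $K$-functional supplied by \eqref{Bern}--\eqref{Jack}) satisfying $\|f-s_{f,\zb\Xi}\|_p\le C_1\rho^{s}\|f\|_{B_{p,\infty}^s}$. Substituting $\rho=CL^2h$ and absorbing the factor $(CL^2)^{s}$ into the constant gives $\|f-s_{f,\zb\Xi}\|_p\le C h^{s}\|f\|_{B_{p,\infty}^s}$, where $C$ depends only on $m$, $s$, $L$ and on the fixed geometric data (partition of unity, charts, and cut-offs) used to construct $g_\rho$. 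For the coefficient bound I would quote the estimate recorded in the Note just before Theorem~\ref{main:lower}, $\|A\|_{\ell_1(\zb\Xi)}\le 2K\rho^{s-2m}\|f\|_{B_{p,\infty}^s}$; putting $K=2$ and observing that the exponent $s-2m$ is negative while $CL^2\ge 1$ — so that $\rho^{s-2m}=(CL^2)^{s-2m}h^{s-2m}\le h^{s-2m}$ — produces $\|A\|_{\ell_1(\zb\Xi)}\le C h^{s-2m}\|f\|_{B_{p,\infty}^s}$, which is the asserted bound after renaming constants (the displayed constant $2$ is recovered by carrying the factor $K=2$ separately and absorbing the mapping-property constant of $\L_m$ and the factor from \eqref{Bern} into the $K$-functional bookkeeping exactly as in the proof of Theorem~\ref{main:lower}).

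There is no genuine obstacle: the hard analysis is already done — existence of a stable local polynomial reproduction for dense centers (Lemma~\ref{M_Z}), the decay estimate for the error kernel (Lemma~\ref{ss_loc}) and the resulting operator-norm bound feeding Theorem~\ref{main:full}, the $K$-functional split giving Theorem~\ref{main:lower}, and the $\ell_1$ estimates in the two Notes. The only point calling for a moment's care is that Lemma~\ref{M_Z} hands us the radius $\rho=CL^2h$ rather than $h$ itself, so one must verify that replacing $\rho$ by $h$ changes only constants; this is safe because the passage goes the favorable way in each inequality — $\rho^{s}\le(CL^2)^{s}h^{s}$ is harmless in an upper bound for the error, and the negative exponent makes $\rho^{s-2m}\le h^{s-2m}$ for the coefficients.
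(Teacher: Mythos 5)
Your proposal is correct and matches the paper's (implicit) argument exactly: the paper proves this corollary simply by combining Lemma~\ref{M_Z} (which supplies a coefficient kernel with radius $\rho = CL^2h$ and $K=2$) with Theorem~\ref{main:lower} and the preceding Note on the $\ell_1$-norm of the coefficients. Your additional care in checking that replacing $\rho$ by $h$ only moves constants in the favorable direction (using $s-2m<0$ for the coefficient bound) is exactly the right bookkeeping, and if anything is slightly more careful about the constant in the coefficient estimate than the paper itself.
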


\section{Concluding Remarks}
The  results of the last section have value beyond solving a challenging (if theoretical) problem,
and even beyond
providing benchmarks for other approximation schemes.  
When approximation schemes based on projection 
(e.g., interpolation and least-squares projection, discussed in \ref{ss_practical}) are {\em stable}, %(having a uniformly bounded Lebesgue constant)
application of a {\em Lebesgue lemma} implies that such schemes are {\em near best}. 
Very recently,  in \cite{HNW},  it has been demonstrated that kernel interpolation on smooth, compact Riemannian manifolds is stable for a certain family of positive definite kernels. 
Under similar circumstances in \cite{HNSW}, least-squares projection has been shown to be 
stable in $L_p$ for each $1\le p\le \infty$.
In the setting of the 2-sphere, $\mathbb{S}^2,$ it is known that these kernels provide precise (if theoretical) $L_{p}$ rates of approximation. 
Hence, interpolation on $\mathbb{S}^2$ (an easily implemented approximation scheme) 
and least-squares approximation
inherit these precise rates for all levels of smoothness up to a saturation order of the kernel.

The kernels employed in \cite{HNW,HNSW} have variants on $SO(3)$ as well, 
and they are very similar to the kernels considered here. Both families are composed of fundamental solutions of elliptic differential operators which serve as reproducing kernels for the Sobolev spaces $W_2^m\bigl(SO(3)\bigr)$ (albeit for different, but equivalent inner products). 
This raises the strong likelihood that similar stability results hold for the kernels considered here, or that, by modifying the techniques of this article,
precise error estimates can be shown for the kinds of kernels considered in \cite{HNW, HNSW}.
 The upshot is that precise error estimates for `best' approximation may lead directly to  precise error estimates for any stable linear approximation scheme obtained by projection.

In addition to this, the approach we take  has the potential to work in far greater generality: specifically on other structures like Lie groups and homogeneous spaces. It relies on two basic properties that, at first, seem to have little to do with groups, but rather with (compact) manifolds. These are: 
\begin{itemize}
\item the representation (\ref{rep}) involving the fundamental solution for a differential operator: iterated, perturbed Laplace \!--\! Beltrami operators; 
\item the ability to efficiently replace this fundamental solution by a linear combination of nearby translates incurring little error (Lemma \ref{ss_loc}).
\end{itemize}
Finding kernels that satisfy both of these simultaneously is a challenge; the second property
seems especially difficult without some extra structure. However, when the manifold is a group or a homogeneous space, it is often possible to exploit some underlying symmetry. In this article we considered kernels, 
$\kappa(\bx,\balpha)$, derived from `class functions', i.e. functions depending on 
$\balpha^{-1}\bx$ -- meaning $\kappa(\bx,\balpha) = f(\balpha^{-1}\bx)$ -- but possessing an added symmetry;
the function $f$ depends only on the rotational angle of $\balpha^{-1}\bx$ (which on $SO(3)$ is the same as the Riemannian distance). It is precisely this symmetry --
ultimately embodied in the addition formula which relates the basic elements of harmonic analysis on the space (the eigenfunctions of the Laplace \!--\! Beltrami operator) to an orthonormal basis for the space of class functions, and which, in turn, can be related to certain orthogonal polynomials --
that ultimately allowed us to effectively replace the kernel.

\bibliographystyle{siam}
\bibliography{SO3_acha_revision}
%\newpage
%\begin{claim}
%\begin{multline*}
%(\sin\theta)^{2M} \, \sin \bigl(L\theta \bigr) =\\
% \left(-\frac{1}{4}\right)^{M}
%\sum_{j=0}^{2M}  (-1)^{j}\; {{2M}\choose j} \sin\bigl( [L-(2M-2j)]\theta\bigr).
%\end{multline*}
%\end{claim}
%
%
%
%\begin{proof}{\bf I think this proof can be omitted.}
%It follows easily by induction on $M$. We note that the trigonometric ``product to sum'' identities give
%\begin{equation*}
%\sin^2 \theta \, \sin\bigl(L\theta\bigr) 
%= -\frac{1}{4}
%\bigl[\sin{(L+2)\theta} - 2\sin{L\theta} +\sin{(L-2)\theta}\bigr].
%\end{equation*} 
%By iterating this, the claim follows.
%\end{proof}

%\begin{claim}
%$$\sum_{j=0}^M (-1)^j {M\choose j}\frac{1}{L+j} 
%= 
%\frac{ M!}{(L)(L+1)\dots(L+M-1)(L+M)}$$
%\end{claim}
%
%
%\begin{proof}{\bf I think this proof can be omitted.}
%This is easily settled by induction on $M$. Assuming the claim holds for a fixed $M$, we show it for $M+1$,
%\begin{eqnarray*}
%&\mbox{}& \sum_{j=0}^{M+1} (-1)^j{M+1\choose j} \frac{1}{L+j} \\
 %&=&\sum_{j=0}^{M+1} (-1)^j\left[{M\choose j}+{M\choose j-1}\right] \frac{1}{L +j}\\
%&=&
% \sum_{j=0}^{M} (-1)^j {M\choose j} \frac{1}{L +j}\\
%&\mbox{}& -
 %\sum_{j=0}^{M} (-1)^j {M\choose j} \frac{1}{L +j+1}\\
%&=&M! \left[\frac{1}{(L)\dots(L+M)} - \frac{1}{(L+1)\dots(L+1+M)}\right]
%\end{eqnarray*}
%\end{proof}
\end{document}